%
%
%
%
%
%

\documentclass[preprint,12pt]{elsarticlehack}

\bibliographystyle{plainnat}

\usepackage{amsmath,amsthm,amssymb}
\usepackage{graphicx}
\usepackage{hyperref}
\usepackage{mathrsfs}
\usepackage{color}
\usepackage{tikz}
\usepackage{tikzscale}
\usepackage{enumitem}
\usepackage{array}
\usepackage{multirow}
\usepackage{caption}

\setlength\extrarowheight{3pt}

\vfuzz2pt 
\hfuzz2pt 
\newtheorem{thm}{Theorem}
\newtheorem{cor}[thm]{Corollary}
\newtheorem{lem}[thm]{Lemma}
\newtheorem{prop}[thm]{Proposition}

\theoremstyle{definition}

\theoremstyle{remark}


\newcommand{\eps}{\varepsilon}

\newcommand{\scrQ}{\mathscr{Q}}
\newcommand{\scrP}{\mathscr{P}}
\newcommand{\scrO}{\mathscr{O}}

\newcommand{\scrV}{\mathscr{V}}
\newcommand{\scrR}{\mathscr{R}}
\newcommand{\scrT}{\mathscr{T}}

\newcommand{\scrl}{\ell}
\newcommand{\bfv}[1][]{{v^{#1}}}
\DeclareMathOperator{\vol}{vol} 

\def\mystrut(#1,#2){\vrule height #1pt depth #2pt width 0pt}

\journal{Discrete Applied Mathematics}

\begin{document}

\begin{frontmatter}

\title{Volume computation for sparse\\ boolean quadric relaxations}%

\author[1]{Jon Lee}%
\address[1]{Dept. of I\&OE, University of Michigan, Ann Arbor, MI, USA.}
\author[2]{Daphne Skipper}%
\address[2]{Dept. of Mathematics, U.S. Naval Academy, Annapolis, Maryland, USA.}

%


%

%

\begin{abstract}
Motivated by understanding the quality of tractable convex
relaxations of intractable polytopes, Ko et al. gave a closed-form expression for the volume of a standard relaxation $\scrQ(G)$ of the boolean quadric polytope (also known as the (full) correlation polytope) $\scrP(G)$ of the complete graph $G=K_n$.
We extend this work to structured sparse graphs. In particular, we (i) demonstrate the existence of an efficient algorithm for $\vol(\scrQ(G))$ when $G$ has bounded treewidth, (ii) give closed-form expressions (and asymptotic behaviors)
for $\vol(\scrQ(G))$ for all stars, paths, and cycles,
and (iii) give a closed-form expression for $\vol(\scrP(G))$ for all cycles.
Further, we demonstrate that when $G$ is a cycle, the simple relaxation $\scrQ(G)$ is a very close model for the much more complicated $\scrP(G)$.
Additionally, we give some computational results demonstrating
that this behavior of the cycle seems to extend to more complicated graphs. Finally, we speculate on the possibility of extending
some of our results to cactii or even series-parallel graphs.
\end{abstract}

\begin{keyword}
volume \sep boolean quadric polytope \sep correlation polytope \sep mixed-integer non-linear optimization \sep order polytope \sep counting linear extensions \sep
bounded treewidth \sep cut polytope



\MSC[2010] 52B11 \sep 52A38 \sep 90C26 \sep 90C10

\end{keyword}

\end{frontmatter}


\section{Introduction}\label{sec:intro}
For a simple undirected graph $G=(V,E)$ with vertex set $V:=\{1,2,\ldots,n\}$ and edge set $E\subset \{(i,j) ~:~ 1\leq i<j \leq n\}$, we let $m:=|E|$.
The \emph{(graphical) boolean quadric polytope} $\scrP(G)$ is the convex hull in dimension $d:= n+\binom{n}{2}$ of the set of binary solutions $\{x_i, y_{ij}: i \in V, i<j \in V\}$ to the system:
\[
x_ix_j=y_{ij} \mbox{, for each edge }  (i,j) \in E.
\]
 When $G$ is the complete graph $K_n$,
we have the well-known boolean quadric polytope (or \emph{(full) correlation polytope}) $\scrP(K_n)$.

  The polytope $\scrP(G)$ is contained in and naturally modelled by $\scrQ(G)$, the solution set in $\mathbb{R}^d$ of the linear inequalities
  \begin{align}
  y_{ij}  &~\geq~  0, \tag{F0}\label{facet0} \\
y_{ij}  &~\leq~  x_i, \tag{F1}\label{facet1} \\
y_{ij}  &~\leq~  x_j,  \tag{F2}\label{facet2} \\
x_i + x_j  &~\leq~  1 + y_{ij},  \tag{F3}\label{facet3}
\end{align}
for $(i,j) \in E$, in which the variables are now relaxed to real values,
which here amounts to the continuous interval $[0,1]$.

Padberg heavily investigated these fundamental polytopes, describing families of facet-describing inequalities, the affine equivalence of these polytopes with the cut polytope
(and its relaxations)
of a complete graph, and much more (see \cite{Padberg1989}); also see \cite{BQ4} and \cite{BQ5}.
Under the name of correlation polytopes, further early work concerning geometry and complexity appeared as \cite{Pitowsky1991};
also see \cite{DezaLaurent}.
The boolean quadric polytope and related polytopes are fundamental to state-of-the-art approaches (both exact branch-and-bound methods and approximation algorithms) for NP-hard max-cut problems; see \cite{Wiegele2010} and \cite{GW1995}. Improvements in how the
geometry can be exploited computationally have recently been discovered; see \cite{BQ1}.
 Additionally, the boolean quadric polytope and its relaxations
are fundamental to a recent successful approach to general box-constrained quadratic-optimization problems; see  \cite{BGL}. Relatives of the boolean quadric
polytope appear in other combinatorial problems; see  \cite{BQ3} and   \cite{BQ2}, for example.
 Recent work on the currently  hot area of extension complexity (for example, see \cite{CCZ2010}) showed that these polytopes do not have compact extended formulations (i.e., they are not projections of polytopes with a polynomial number of facets and variables); see \cite{Fiorini2015} and \cite{Kaibel2015}. An implication of Padberg's work is
that there is an efficient algorithm to decide if a linear function is optimized over $\scrQ(G)$ by some integer extreme point.
In very recent work, \cite{Nikolaev2016} studied a natural generalization of $\scrQ(G)$ and found that the corresponding problem is NP-complete.

 Although we concentrate on the boolean quadric polytope $\scrP(G)$ of a graph $G$
(and its relaxations), it is well known that $\scrP(G)$ is intimately related, via a linear transformation, to the cut polytope of the graph $G+u$:
 for which a new vertex $u$ is joined to every vertex of $G$ (see \cite{BQ4}).
 In fact, the determinant of that linear transformation is known (see \cite{KLS1997}), and so every volume result concerning $\scrP(G)$
 and its relaxations immediately gives a volume result
 concerning the cut polytope of $G+u$ and related relaxations.

Lee and Morris (see \cite{LM1994}) introduced the idea of using volume as a measure for evaluating relaxations of combinatorial polytopes. They were particularly
motivated by the now hot area (for example, see \cite{LeeLeyffer}) of optimizing nonlinear functions
over polytopes. Lee and Morris obtained results comparing uncapacitated
facility-location polytopes with their natural relaxations and
stable-set polytopes with their relaxations (also see  \cite{Steingr}). In particular, \cite{LM1994} demonstrated that
in a precise asymptotic sense, natural relaxations should be
adequate when the number of customers increases much faster than the number
of potential facilities.
Their work on volume-based comparison for facility-location relaxations was borne out in computational experiments
done with later-available convex mixed-integer nonlinear-optimization solvers
like \verb;Bonmin; (see \cite{Lee2007}). In other recent work of this type,
volume was used to engineer an aspect of the ``spatial-branch-and-bound''
approach to non-convex mixed-integer nonlinear-optimization; see
 \cite{SpeakmanLee2015},  \cite{SpeakmanYuLee}, \cite{SpeakmanLee_Branching}.

In \cite{KLS1997}, Ko, Lee, and Steingr{\'{\i}}msson
 established that the $d$-dimensional volume of $\scrQ(K_n)$ is $2^{2n-d}n!/(2n)!$.
 This work can be seen as a key first step in comparing $\scrQ(K_n)$ with $\scrP(K_n)$ via volume.
 Unfortunately, we still do not have a good estimate of the volume of $\scrP(K_n)$, but the suspicion is that it is
 substantially smaller than that of $\scrQ(K_n)$.

At the other extreme, as compared to $K_n$, when $G$ has no edges, $\scrQ(G)=[0,1]^d$, the $d$-dimensional unit hypercube $\{(x,y) ~:~ x\in [0,1]^n, y \in [0,1]^{\binom{n}{2}}\}$.
In what follows, we consider the volume of $\scrQ(G)$ for cases when $0<|E(G)| < \binom{n}{2}$. In particular, we focus our attention on sparse $G$.

Our interest in studying the boolean quadric polytope and its relaxations for sparse graphs, from the viewpoint of volumes,
is based on: (i) the fundamental role that the boolean quadric polytope
has been playing in combinatorial optimization through recent times,
(ii) the generally easier tractability of combinatorial-optimization problems on sparse graphs (see \cite{BK2008}),
and (iii) the recent success of using  volume as a means of comparing relaxations.

In our volume calculations for $\scrQ(G)$, we are able to consider inequalities \ref{facet1} and \ref{facet2} (largely) independently from \ref{facet3}.  For convenience, we define an additional (well-known)
polytope arising from $G$.  Let $\scrO(G)$ denote the \emph{order polytope} of $G$ (see \cite{Stanley1986}): the subset of $[0,1]^d$ satisfying inequalities of the form \ref{facet1} and \ref{facet2} (but not necessarily \ref{facet3}) for edges $(i,j) \in E$.

We use the notation $\vol_d()$ to denote the $d$-dimensional volume of a convex body in $\mathbb{R}^d$.
We first observe a very simple and useful formula for the volumes of the polytopes that we associate with a graph $G$, given the volumes of the polytopes associated with the connected components of $G$.

\begin{lem} \label{crossproduct}
Let $G = (V,E)$ be a simple graph with connected components $G_i = (V_i, E_i)$, $i = 1, 2, \dots, k$.  Then
\[
\vol_d(\mathscr{X}(G)) = \prod_{i=1}^k \vol_{d_i}(\mathscr{X}(G_i)),
\]
where $\mathscr{X} \in \left\{\scrP, \scrQ, \scrO\right\}$, $d = |V| + \binom{|V|}{2}$, and $d_i = |V_i| +  |E_i|$.
\end{lem}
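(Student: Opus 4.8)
The plan is to realize $\mathscr{X}(G)$ as a Cartesian product of the component polytopes $\mathscr{X}(G_i)$ together with a collection of unit intervals, and then invoke the multiplicativity of Euclidean volume under Cartesian products. First I would set up the coordinate bookkeeping. The ambient space $\mathbb{R}^d$ carries coordinates $x_j$ for $j\in V$ and $y_{ij}$ for the pairs $1\le i<j\le n$. Since each vertex lies in exactly one component and every edge of $G$ lies entirely inside one component, I can partition the coordinates into blocks: for $a\in\{1,\dots,k\}$ put $S_a:=\set{x_j : j\in V_a}\cup\set{y_{ij} : (i,j)\in E_a}$, and let $T$ collect the remaining $y$-coordinates, i.e.\ those $y_{ij}$ with $(i,j)\notin E$. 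Then $|S_a|=|V_a|+|E_a|=d_a$, the blocks are pairwise disjoint, and $\sum_{a=1}^{k}|S_a|+|T|=\big(|V|+|E|\big)+\big(\binom{|V|}{2}-|E|\big)=d$, so $S_1,\dots,S_k,T$ partition all $d$ coordinates.

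Next I would verify that $\mathscr{X}(G)$ respects this block structure, namely $\mathscr{X}(G)=\big(\prod_{a=1}^{k}\mathscr{X}_a\big)\times[0,1]^{T}$, where $\mathscr{X}_a$ is the copy of $\mathscr{X}(G_a)$ placed in the coordinates $S_a$ (taken in the reduced dimension $d_a$). For $\mathscr{X}\in\set{\scrQ,\scrO}$ this follows directly from the inequality descriptions: each of \ref{facet0} through \ref{facet3} (resp.\ \ref{facet1}, \ref{facet2}) is attached to a single edge $(i,j)$ and involves only $x_i,x_j,y_{ij}$, all of which lie in the block of the component containing that edge; the bound constraints $x_j\in[0,1]$ and $y_{ij}\in[0,1]$ each involve a single coordinate; and nothing constrains the coordinates of $T$. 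Hence the defining system of $\mathscr{X}(G)$ is the disjoint union of the defining systems of the $\mathscr{X}_a$ together with $0\le y_{ij}\le 1$ for $(i,j)\in T$, which is exactly the asserted product. For $\mathscr{X}=\scrP$ I would instead argue on the binary generators: the equations $x_ix_j=y_{ij}$ are again edge-local and the variables $y_{ij}$ with $(i,j)\in T$ are free, so the binary feasible set of $\scrP(G)$ is the Cartesian product of the binary feasible sets of the $\scrP(G_a)$ with $\set{0,1}^{T}$; since the convex hull of a Cartesian product of finite point sets equals the product of their convex hulls, $\scrP(G)$ is again the stated product.

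Finally, because the $d$-dimensional volume of a Cartesian product is the product of the volumes of the factors and $\vol_{|T|}([0,1]^{T})=1$, we obtain $\vol_d(\mathscr{X}(G))=\prod_{a=1}^{k}\vol_{d_a}(\mathscr{X}_a)\cdot 1=\prod_{a=1}^{k}\vol_{d_a}(\mathscr{X}(G_a))$, as claimed. I do not expect a genuine obstacle here; the only points needing care are the dimension bookkeeping --- in particular the convention that $\mathscr{X}(G_a)$ is measured in its reduced dimension $d_a=|V_a|+|E_a|$ rather than $|V_a|+\binom{|V_a|}{2}$, the discrepancy being absorbed by the unit-interval factors $[0,1]^{T}$ --- and, in the $\scrP$ case, the elementary fact that $\mathrm{conv}(A\times B)=\mathrm{conv}(A)\times\mathrm{conv}(B)$.
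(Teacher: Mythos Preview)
Your proposal is correct and follows essentially the same approach as the paper: observe that the non-edge $y$-variables are constrained only by $0\le y_{ij}\le 1$ and hence contribute unit factors, and that the remaining constraints split over the connected components so that volumes multiply under Cartesian products. Your write-up is in fact more detailed than the paper's, which handles the $\scrP$ case only implicitly, whereas you spell out the $\mathrm{conv}(A\times B)=\mathrm{conv}(A)\times\mathrm{conv}(B)$ step.
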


\begin{proof}
If $i < j \in V$, but $(i,j) \notin E$, the variable $y_{i j}$ appears in no inequalities defining $\mathscr{X}(G)$ other than $0 \leq y_{ij} \leq 1$.  Each of these variables contribute a unit multiplier to the $d$-dimensional volume calculation of $\mathscr{X}(G)$; in other words, we can ignore them in our volume formulae.

Because the connected components of $G$ share no common vertices or edges, the polytopes $\mathscr{X}(G_i), i \in \{1, 2, \dots, k\},$ are defined on pairwise disjoint sets of variables. The result then easily follows by noting that
volumes multiply under cross products.
\end{proof}

\begin{cor}
If $G$ is a matching with $m\geq 1$ edges, then $\vol(G)=1/6^m$.
\end{cor}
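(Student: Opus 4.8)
The plan is to reduce everything to the single-edge graph $K_2$ via Lemma~\ref{crossproduct} (reading $\vol(G)$ as $\vol_d(\scrQ(G))$). A matching with $m\ge 1$ edges has connected components consisting of $m$ copies of $K_2$ together with (possibly) some isolated vertices; an isolated vertex $v$ contributes the one-dimensional factor $\scrQ(\{v\})=[0,1]$, of volume $1$, so Lemma~\ref{crossproduct} gives $\vol_d(\scrQ(G))=\vol_3(\scrQ(K_2))^{\,m}$. It therefore suffices to prove $\vol_3(\scrQ(K_2))=\tfrac16$.

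For the single edge $(1,2)$, I would first note that \ref{facet0}--\ref{facet3} already force $(x_1,x_2)\in[0,1]^2$ (from $0\le y_{12}\le x_i$ and $x_1+x_2\le 1+y_{12}\le 1+x_j$), so $\scrQ(K_2)$ is bounded. Intersecting the four facet-hyperplanes three at a time shows that the vertices of $\scrQ(K_2)$ are exactly the four points $(0,0,0),(1,0,0),(0,1,0),(1,1,1)$; these are affinely independent, so $\scrQ(K_2)$ is a $3$-simplex. Translating the first vertex to the origin, $\vol_3(\scrQ(K_2))$ equals $\tfrac1{3!}$ times the absolute value of the determinant of the matrix with rows $(1,0,0),(0,1,0),(1,1,1)$, i.e.\ $\tfrac16\cdot\abs{-1}=\tfrac16$. (Equivalently: all four vertices are integral, so $\scrQ(K_2)=\scrP(K_2)$, and one may instead specialize the formula $2^{2n-d}n!/(2n)!$ of \cite{KLS1997} at $n=2$, $d=3$, obtaining $2\cdot 2!/4!=\tfrac16$; or evaluate the triple integral of $1$ over the region cut out by \ref{facet0}--\ref{facet3}.)

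Combining the two steps gives $\vol_d(\scrQ(G))=(\tfrac16)^m=1/6^m$. There is essentially no obstacle here: the only point requiring (minor) care is the bookkeeping of the isolated vertices and of the ``missing'' variables $y_{ij}$ for non-edges, and both are already dispatched by Lemma~\ref{crossproduct}, which shows each such coordinate contributes only a unit multiplier.
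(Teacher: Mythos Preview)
Your proposal is correct and follows essentially the same approach as the paper: apply Lemma~\ref{crossproduct} to reduce to $K_2$, then evaluate $\vol_3(\scrQ(K_2))=\tfrac16$. The paper simply cites the Ko--Lee--Steingr\'{\i}msson formula for this last step (which you also mention parenthetically), whereas your primary argument computes the simplex volume directly; both are fine, and your extra detail on the vertices of $\scrQ(K_2)$ is accurate (a trivial sign slip in the determinant notwithstanding, since you take the absolute value).
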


\begin{proof}
Follows easily from Lemma \ref{crossproduct} and Ko et al.'s formula applied to the one-edge graph $K_2$.
\end{proof}

 Note that Lemma \ref{crossproduct} allows us to mostly restrict our focus to connected graphs.
Even when we consider graphs with multiple
connected components, we can omit $x$ variables for isolated vertices and $y$-variables that do not represent edges of $G$ without affecting volumes.
 In this way, it really does not matter whether we regard
our polytopes to be in $\mathbb{R}^{n+\binom{n}{2}}$ or $\mathbb{R}^{n+m}$,
so often we will omit the dimension and just write $\vol(\cdot)$.
An exception to this is when we carry out asymptotic analysis, see \S\ref{sec:asymp}, where the dimension of the ambient space is important.

We conclude this section with a brief overview of the rest of the paper.
In \S\ref{sec:order}, we mildly extend the relationship from \cite{KLS1997}
between $\scrQ(G)$ and $\scrO(G)$ for a graph $G$, and point out how for graphs of bounded treewidth (e.g., series-parallel graphs), we can then compute $\vol(\scrQ(G))$ in polynomial time.
In the three sections that follow, we develop closed-form expressions for simple families of sparse graphs
(in particular, stars, paths and cycles). In doing so, we are then able
to carry out some asymptotics to gain useful insights. Furthermore, our analyses
 serve to demonstrate how complicated the solution can be even for
 very simple classes of sparse graphs, giving some indication that
 a non-trivial algorithm really is needed for graphs of bounded treewidth.
In \S\ref{sec:stars}, we give a closed-form expression for $\vol(\scrQ(G))$, when $G$ is a star.
In \S\ref{sec:paths}, we give a closed-form expression for $\vol(\scrQ(G))$, when $G$ is a path.
Stars and paths are of course forests, and so in those cases, as established by Padberg (\cite[Proposition 8]{Padberg1989}), we have that $\vol(\scrQ(G))=\vol(\scrP(G))$.
In \S\ref{sec:cycles}, we give closed-form expressions for $\vol(\scrQ(G))$ \emph{and} $\vol(\scrP(G))$, when $G$ is a cycle.
We note that our closed-form expressions involve factorial and Euler numbers.
For information about calculating them efficiently, we refer the
reader to \cite{Borwein1985}, \cite{BrentHarvey2011}, and \cite{Farach-Colton2015}.
In \S\ref{sec:asymp}, we make asymptotic analyses of the formulae that we have.
In particular, we find that $\vol(\scrQ(G))$ and $\vol(\scrP(G))$ are quite close when $G$ is a cycle.
In \S\ref{sec:exper}, we report on computational experiments designed to see how
the behavior of cycles may show up  for more complex graphs.
In \S\ref{sec:conclusions}, we describe an avenue for further investigation.

Throughout, we assume familiarity with basic notions in graph theory (see \cite{West}, for example)
and polyhedral theory (see \cite{LeeCombOpt}, for example).
But briefly, we will summarize important facts and terminology related to polytopes.
As is common, for a polytope $P\subset \mathbb{R}^d$, the \emph{dimension} $\dim(P)$ is the maximum number of
affinely independent points in $P$, minus 1. If $\dim(P)=d$, then $P$ is \emph{full dimensional}.
An inequality $\alpha'x \leq \beta$ is \emph{valid} for $P$ if
is  satisfied by all points in $P$.
The \emph{face} of $P$ \emph{described by} the valid inequality $\alpha'x \leq \beta$  is
$\{x\in P ~:~ \alpha'x = \beta\}$. Faces of polytopes are again polytopes, and the faces
of dimension one less than that of $P$ are its \emph{facets}. For a full-dimensional polytopes $P$,
 there are a finite number of facets,
each facet-describing inequality is unique up to a positive scaling,
and $P$  is the solution set of its facet-describing inequalities.


 \section{Order polytopes} \label{sec:order}

The following two results reduce the problem of calculating the volumes of $\scrO(G)$ and $\scrQ(G)$ to that of counting the number of linear extensions of a certain poset (partially-ordered set).  In particular, let $(\scrV(G), \prec)$ denote the poset on
\[
\scrV(G) =  \{x_i ~:~ i \in V\} \cup \{y_{ij} ~:~ (i,j) \in E\},
\]
with $y_{ij} \prec x_i$ and $y_{ij} \prec x_j$, for all edges $(i,j) \in E$. This poset is known as the \emph{incidence poset} of $G$.
 Let $e(\scrV(G), \prec)$ denote the number of linear extensions of $(\scrV(G), \prec)$; i.e., the number of order-preserving permutations of $\scrV(G)$.

\begin{thm} \label{le_to_volO} Let $G=(V,E)$ be a simple graph.  Then
\[
\vol(\scrO(G)) = \frac{e(\scrV(G), \prec)}{d!},
\]
where $d = |V|+|E|$.
\end{thm}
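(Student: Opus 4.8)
The plan is to identify $\scrO(G)$ with the order polytope (in Stanley's sense) of the incidence poset $(\scrV(G),\prec)$ and then reprove the standard volume formula for order polytopes via the canonical triangulation by linear extensions. Observe first that $\scrV(G)$ has exactly $d=|V|+|E|$ elements, and that $(\scrV(G),\prec)$ is a height-two poset in which the only (non-reflexive) relations are $y_{ij}\prec x_i$ and $y_{ij}\prec x_j$. Hence a point $(x,y)\in[0,1]^{\scrV(G)}$ lies in $\scrO(G)$, i.e.\ satisfies all inequalities of type \ref{facet1} and \ref{facet2} together with the box constraints, if and only if the corresponding function $f\colon\scrV(G)\to[0,1]$ is order preserving for $\prec$. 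So $\scrO(G)$ \emph{is} the order polytope of $(\scrV(G),\prec)$, and the theorem is the special case of Stanley's formula.

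To prove it, for each linear extension of $(\scrV(G),\prec)$, written as a relabeling $z_1,z_2,\dots,z_d$ of the elements of $\scrV(G)$ by a total order that refines $\prec$, define the simplex
\[
\Delta_{z} := \left\{\, f\in[0,1]^{\scrV(G)} ~:~ 0\le f(z_1)\le f(z_2)\le\cdots\le f(z_d)\le 1 \,\right\}.
\]
Three points need checking. First, $\Delta_z\subseteq\scrO(G)$: if $p\prec q$ in $\scrV(G)$, then $p$ precedes $q$ in the chain $z_1,\dots,z_d$, so $f(p)\le f(q)$; thus \ref{facet1} and \ref{facet2} hold, and the box constraints hold by construction. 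Second, the simplices $\Delta_z$ have pairwise disjoint interiors and their union is all of $\scrO(G)$: any $f\in\scrO(G)$ whose coordinate values are pairwise distinct determines a unique total order on $\scrV(G)$ by sorting its values increasingly, and that order refines $\prec$ (since $p\prec q$ forces $f(p)\le f(q)$, hence $f(p)<f(q)$), so $f$ lies in the interior of exactly one $\Delta_z$; the locus where two coordinates coincide has measure zero. Third, each $\Delta_z$ has volume $1/d!$, being one of the $d!$ mutually congruent cells into which the braid arrangement $\{u_i=u_j\}$ partitions $[0,1]^d$. Summing, $\vol(\scrO(G))=\sum_z \vol(\Delta_z)=e(\scrV(G),\prec)/d!$.

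The only real obstacle is the second point, the tiling claim; its content is the combinatorial fact that sorting the coordinates of a generic point of an order polytope always yields a genuine linear extension of the underlying poset, which is exactly what pins down the simplex containing that point. The first and third points are routine. One could alternatively invoke \cite{Stanley1986} directly, but since the triangulation argument is short and self-contained, I would include it.
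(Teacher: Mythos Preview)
Your proof is correct and follows essentially the same approach as the paper: identify $\scrO(G)$ with Stanley's order polytope of the incidence poset and invoke the volume formula $\vol=e(\scrV(G),\prec)/d!$. The only difference is that the paper simply cites \cite[Corollary~4.2]{Stanley1986}, whereas you supply the self-contained triangulation argument; your inclusion of that argument is fine and your three checks are all sound.
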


\begin{proof}
Our polytope $\scrO(G)$ is an order polytope as described by Stanley in \cite{Stanley1986}; this result follows directly from Corollary 4.2 in the same paper.
\end{proof}
\bigskip

\begin{thm} \label{volO_to_volQ}
Let $G=(V,E)$ be a simple graph.  Then
\[
\vol(\scrQ(G)) ~=~ \frac{\vol(\scrO(G))}{2^{|E|}} ~=~ \frac{e(\scrV(G), \prec)}{d!2^{|E|}} ,
\]
where $d = |V|+|E|$.
\end{thm}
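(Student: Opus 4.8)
The plan is to exhibit an explicit volume-preserving (up to an easily computed constant Jacobian) map between $\scrQ(G)$ and $\scrO(G)$, edge by edge. The second equality is then immediate from Theorem~\ref{le_to_volO}, so the real content is the first equality $\vol(\scrQ(G)) = \vol(\scrO(G))/2^{|E|}$.

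First I would recall the structure of the two polytopes on a single edge $(i,j)\in E$: in $\scrQ(G)$ the variable $y_{ij}$ is constrained by \ref{facet0}--\ref{facet3}, i.e. $\max\{0,\,x_i+x_j-1\} \le y_{ij} \le \min\{x_i,x_j\}$, whereas in $\scrO(G)$ it is constrained only by \ref{facet1}--\ref{facet2} together with $y_{ij}\ge 0$, i.e. $0 \le y_{ij} \le \min\{x_i,x_j\}$. The key elementary observation is that for fixed $x_i,x_j\in[0,1]$ the $\scrQ$-interval for $y_{ij}$ has length $\min\{x_i,x_j\} - \max\{0,x_i+x_j-1\}$, while the $\scrO$-interval has length $\min\{x_i,x_j\}$, and the ratio of these lengths is \emph{not} constant --- so a naive ``affine scaling in the $y$ direction'' does not work. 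Instead I would follow \cite{KLS1997}: on each edge, map the $\scrQ$-slab to the $\scrO$-slab by a piecewise-linear shear that is volume-halving. Concretely, one checks that the $\scrQ$-region over the square $[0,1]^2$ in the $(x_i,x_j,y_{ij})$-coordinates can be cut into finitely many simplices, and the $\scrO$-region likewise, in such a way that the $\scrO$-region is the image of the $\scrQ$-region under a map that doubles volume (equivalently, $\scrQ$ over one edge has exactly half the volume of $\scrO$ over that edge). The cleanest route is to verify directly that $\operatorname{vol}_3$ of $\{(x_i,x_j,y_{ij}) : 0\le x_i,x_j\le 1,\ \max\{0,x_i+x_j-1\}\le y_{ij}\le\min\{x_i,x_j\}\}$ equals $\tfrac12\operatorname{vol}_3$ of $\{(x_i,x_j,y_{ij}) : 0\le x_i,x_j\le 1,\ 0\le y_{ij}\le\min\{x_i,x_j\}\}$; both are elementary integrals ($1/6$ versus $1/3$), matching the Corollary's $1/6^m$ for matchings.

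Then I would promote this single-edge computation to all of $G$ by a Fubini/iterated-integration argument. Writing $\vol(\scrQ(G)) = \int_{[0,1]^n}\big(\prod_{(i,j)\in E}\ell^{\scrQ}_{ij}(x_i,x_j)\big)\,dx$ where $\ell^{\scrQ}_{ij}$ is the length of the allowed $y_{ij}$-interval, and similarly $\vol(\scrO(G)) = \int_{[0,1]^n}\big(\prod_{(i,j)\in E}\ell^{\scrO}_{ij}(x_i,x_j)\big)\,dx$ --- this factorization over edges is valid because each $y_{ij}$ appears only in the inequalities of its own edge --- it would suffice to have $\ell^{\scrQ}_{ij} = \tfrac12\ell^{\scrO}_{ij}$ pointwise. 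But as noted above this pointwise identity is \emph{false}, so the obstacle is genuine. The resolution, again following \cite{KLS1997}, is to change variables in each $x_i$ as well: there is a measure-preserving involution of $[0,1]$ (namely $x_i \mapsto 1-x_i$) and one observes that averaging $\ell^{\scrQ}_{ij}$ over the four sign-patterns of the pair $(x_i,x_j)\mapsto(x_i\text{ or }1-x_i,\ x_j\text{ or }1-x_j)$ does relate cleanly to $\ell^{\scrO}_{ij}$; more precisely, the involution $(x,y)\mapsto(1-x,\,x-y)$ on the coordinates attached to an edge carries the $\scrQ$-slab over $x_j\le x_i$ onto part of the $\scrO$-slab and is volume-preserving, and assembling these local moves across all edges (they are compatible because the vertex flips $x_i\mapsto 1-x_i$ can be done independently) transforms $\scrQ(G)$ onto a region that tiles $\scrO(G)$ exactly $2^{|E|}$ times.

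I expect the main obstacle to be bookkeeping the compatibility of the per-edge transformations: flipping $x_i$ affects every edge incident to $i$ simultaneously, so the ``independent shears on each edge'' picture must be replaced by a single global change of variables on $(x_1,\dots,x_n)$ composed with per-edge shears in the $y$-variables, and one must check its Jacobian is $\pm 1$ and that it sends $\scrQ(G)$ into $\scrO(G)$ surjectively up to a $2^{|E|}$-fold multiplicity. Once that is in place, $\vol(\scrQ(G)) = 2^{-|E|}\vol(\scrO(G))$ follows, and substituting Theorem~\ref{le_to_volO} gives $\vol(\scrQ(G)) = e(\scrV(G),\prec)/(d!\,2^{|E|})$, completing the proof. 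A cleaner alternative, which I would try first to avoid the global bookkeeping, is simply to cite the construction in \cite{KLS1997} verbatim and observe that it is entirely local to edges and hence applies unchanged to any graph $G$, not just $K_n$ --- the proof of Theorem~\ref{volO_to_volQ} may well be just this one-line reduction.
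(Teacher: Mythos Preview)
Your closing ``cleaner alternative'' --- simply citing \cite{KLS1997} and observing that its construction is insensitive to which edges are present --- is precisely what the paper does. So you had the right answer in your last sentence; the issue is only with the main line of argument before it.

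The actual \cite{KLS1997} construction is not an edge-by-edge shear but a single global dilation followed by a partition indexed by \emph{vertices}. One sets $\hat\scrQ(G):=2\scrQ(G)$, so that $\vol(\hat\scrQ(G))=2^{d}\vol(\scrQ(G))$, and partitions $\hat\scrQ(G)$ into the $2^{|V|}$ cells
\[
R_a \;:=\; \{(x,y)\in\hat\scrQ(G): a\le x\le a+\mathbf{1}\},\qquad a\in\{0,1\}^{|V|}.
\]
The involution $x_i\mapsto 2-x_i$, $y_{ij}\mapsto x_j-y_{ij}$ (for each edge $(i,j)$ incident to $i$) has Jacobian $\pm 1$, permutes the four inequalities \ref{facet0}--\ref{facet3} on every such edge, and flips the $i$-th bit of $a$; hence all $R_a$ have the same volume. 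In $R_{\mathbf 0}$ one has $x_i,x_j\in[0,1]$, so $x_i+x_j\le 2\le 2+y_{ij}$ is vacuous and $R_{\mathbf 0}=\scrO(G)$. Thus $2^{d}\vol(\scrQ(G))=2^{|V|}\vol(\scrO(G))$, and the factor $2^{-|E|}$ appears only as the quotient $2^{|V|}/2^{d}$.

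Your main argument had the count and the direction reversed: there is one free binary choice per \emph{vertex}, not per edge, and it is (a dilate of) $\scrQ(G)$ that is tiled by copies of $\scrO(G)$, not the other way around. This is exactly the ``bookkeeping obstacle'' you anticipated: because a vertex flip $x_i\mapsto 1-x_i$ affects every incident edge simultaneously, you cannot make $2^{|E|}$ independent choices, and there is no evident decomposition of $\scrO(G)$ into $2^{|E|}$ congruent copies of $\scrQ(G)$ for general $G$. Your diagnosis that the pointwise ratio $\ell^{\scrQ}_{ij}/\ell^{\scrO}_{ij}$ is non-constant was correct; the dilation-then-partition device is what sidesteps it.
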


\begin{proof}
We proceed as in the proofs of Proposition 1, Corollary 2, and Proposition 3 in \cite{KLS1997}.  The primary difference in our case is that we omit variables $y_{ij}$ for which $(i,j) \notin E$.

Define $\hat{\scrQ}(G) := 2\scrQ(G)$ to be the solution set of
\begin{align}
y_{ij}  &~\leq~  x_i,  \label{ineq1}\\
y_{ij}  &~\leq~  x_j,  \label{ineq2}\\
x_i + x_j  &~\leq~  2 + y_{ij}, \label{ineq3} \\
y_{ij} &~\geq~ 0,
\end{align}
for edges $(i,j) \in E$.  It is clear that  $\vol(\hat{\scrQ}(G))=2^d \vol(\scrQ(G))$, where $d = |V|+|E|$.  Via the same argument as in \cite{KLS1997}, we partition $\hat{\scrQ}(G)$ into $2^{|V|}$ equi-volume polytopes:
\[
R_a := \{(x,y) \in \hat{\scrQ}(G): a \leq x \leq a + \mathbf{1}\}, a \in \{0,1\}^{|V|},
\]
where $\mathbf{1}$ is the $d$-vector $(1,1,\dots,1)$.  In the case of $R_0$, the inequality \eqref{ineq3} is rendered vacuous, so that $R_0$ is an order polytope with
\[
\vol(R_0) = \frac{e(\scrV(G), \prec)}{d!}.
\]
We conclude that

\[
\vol(\scrQ(G))
~=~ \frac{\vol(\hat{\scrQ}(G))}{2^d}
~=~ \frac{2^{|V|} \vol(R_0)}{2^d}
~=~ \frac{e(\scrV(G), \prec)}{2^{|E|} d!}.
\]

\end{proof}

\bigskip

To find the volumes of the relaxation polytopes $\scrO(G)$ and $\scrQ(G)$, we must count the number of linear extensions of $(\scrV(G), \prec)$.  In general, counting the number of linear extensions of a poset is {\#}P-complete; see \cite{BrWi}.
We are particularly interested in situations when
counting the number of linear extensions of $(\scrV(G), \prec)$
is easier, due to some structured sparsity of $G$.

For \emph{any} poset $(N, \prec)$, we can consider its  \emph{directed cover graph}
 $\mathcal{DC}(N, \prec)$, having vertex set $N$ and an edge from vertex $i$ to distinct vertex $j$ when $i \prec j$ and there is no $k$, distinct from $i$ and $j$, with $i \prec k \prec j$. We let $\mathcal{C}(N, \prec)$
 denote the \emph{cover graph}, ignoring the edge
 directions in $\mathcal{DC}(N, \prec)$.

Cover graphs of incidence posets have been studied (see \cite{Trotter2014}, for example).
 We are interested in sparsity properties of  $\mathcal{C}(G):=\mathcal{C}(\scrV(G), \prec)$, the cover graph of the incidence poset of $G$.
  See Figure \ref{covergraph} for an example of a simple graph $G$ and the related directed cover graph $\mathcal{DC}(G)$.

It is clear that $\mathcal{C}(G)$ inherits all of the graph properties of $G$ that are inherited by edge subdivision.  For example, if $G$ is a tree, cycle, cactus, or series-parallel graph, then $\mathcal{C}(G)$ is a tree, cycle, cactus, or series-parallel graph, respectively.
Furthermore, if $G$ has treewidth bounded by $k$, then
 $\mathcal{C}(G)$ has treewidth bounded by $k$:
 whenever $G$ has treewidth $k$, then inserting a vertex on each edge leaves the treewidth at $k$ (this is mentioned and used in many papers, but see \cite[Lemma 1]{Lozin} where it is explicitly stated and proved).
 Besides trees ($k=1$), cycles ($k=2$), cactii ($k=2$), and series-parallel graphs ($k=2$), we can consider outer-planar graphs ($k=2$) and Halin graphs ($k=3$).

Structured sparsity of  $\mathcal{C}(G)$ can be exploited.
\cite{Atkinson} and \cite{Atkinson1990} give polynomial-time algorithms for counting the number of linear extensions when $\mathcal{C}(N, \prec)$ is a tree.  \cite{Chang}  extended this to the case in which $\mathcal{C}(N, \prec)$ is a cactus (also see\cite{Atkinson}). \cite{Kangas2016} extended this to to the case in which $\mathcal{C}(N, \prec)$ has bounded treewidth.  We have then the following fundamental result.

\begin{thm}
For the class of graphs of treewidth bounded by any constant, in  polynomial time, we can calculate $\vol(\scrO(G))$ and $\vol(\scrQ(G))$.
\end{thm}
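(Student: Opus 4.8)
The plan is to assemble the two volume identities of this section with the cited results on counting linear extensions; the theorem is then essentially a corollary. By Theorems~\ref{le_to_volO} and~\ref{volO_to_volQ}, for $G=(V,E)$ with $d=|V|+|E|$ we have $\vol(\scrO(G)) = e(\scrV(G),\prec)/d!$ and $\vol(\scrQ(G)) = e(\scrV(G),\prec)/(d!\,2^{|E|})$, so it suffices to (a)~compute the integer $e(\scrV(G),\prec)$ in time polynomial in the size of $G$, and (b)~check that the remaining arithmetic is polynomial.

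For (a), I would first record that the cover graph $\mathcal{C}(G)$ of the incidence poset is exactly the graph obtained from $G$ by subdividing every edge once: the only comparabilities are $y_{ij}\prec x_i$ and $y_{ij}\prec x_j$, and each is a cover (the elements strictly above $y_{ij}$ form the incomparable pair $\{x_i,x_j\}$), so $\mathcal{C}(G)$ consists of a vertex $x_i$ for each $i\in V$, a vertex $y_{ij}$ for each $(i,j)\in E$, and the path $x_i-y_{ij}-x_j$ for each edge. Since one-edge subdivisions do not raise treewidth (as noted above, cf.\ \cite[Lemma~1]{Lozin}), if $G$ has treewidth at most the fixed constant $k$ then $\mathcal{C}(G)$ has treewidth at most $\max\{k,2\}$. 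A tree decomposition of $\mathcal{C}(G)$ of width $\le\max\{k,2\}$ can be produced in polynomial time (e.g.\ take one of width $\le k$ for $G$ --- standard for fixed $k$ --- and, for each edge, hang a new bag $\{x_i,y_{ij},x_j\}$ off a bag containing $\{i,j\}$). Feeding $(\scrV(G),\prec)$ and this decomposition to the algorithm of \cite{Kangas2016} (or, when $\mathcal{C}(G)$ is a tree or a cactus, to the algorithms of \cite{Atkinson,Atkinson1990,Chang}) yields $e(\scrV(G),\prec)$ in polynomial time.

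For (b), observe that $d=|V|+|E|\le n+\binom{n}{2}$ is polynomial in $n$, so $d!$ has $O(d\log d)$ bits and $2^{|E|}$ has $O(|E|)$ bits, while $0\le e(\scrV(G),\prec)\le d!$ forces the numerator to have polynomially many bits as well. Hence forming the rationals $e(\scrV(G),\prec)/d!$ and $e(\scrV(G),\prec)/(d!\,2^{|E|})$ takes polynomial time, and both $\vol(\scrO(G))$ and $\vol(\scrQ(G))$ are computed.

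The substantive ingredients here are external: the invariance of (bounded) treewidth under edge subdivision, and --- the real workhorse --- the bounded-treewidth linear-extension counting algorithm of \cite{Kangas2016}. So I do not expect a genuine obstacle in the argument itself; the only points requiring care are the (routine) identification of $\mathcal{C}(G)$ as the edge-subdivision of $G$ and the (equally routine) bookkeeping that keeps every number polynomial-size.
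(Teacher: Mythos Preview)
Your proposal is correct and follows exactly the approach the paper intends: the theorem is stated immediately after the paper observes that $\mathcal{C}(G)$ is the edge-subdivision of $G$ (hence inherits bounded treewidth) and cites \cite{Kangas2016} for polynomial-time linear-extension counting on posets whose cover graphs have bounded treewidth, with Theorems~\ref{le_to_volO} and~\ref{volO_to_volQ} converting the count to the desired volumes. You have simply filled in the (routine) details of this implicit argument, including the bit-complexity bookkeeping, which the paper omits.
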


Interestingly,  \cite{BrWi} proved that
 counting the number of linear extensions of a poset is {\#}P-complete
 even for posets of height 3, and they conjectured that this is
 true even for posets of height 2. The height-2 situation is very relevant to our investigation because our posets $(\scrV(G), \prec)$ have height 2.
 However, our posets are rather special posets of height 2, in that all of our $y$ vertices have degree 2, so a positive complexity result is more likely. In any case,
Brightwell and  Winkler asserted\footnote{January 16, 2017, private communication.} that: (i) the complexity for the general height-2 case is still open; (ii) there seems to be no work on counting linear extensions of incidence posets; (iii)
there is no compelling reason to believe that the case of incidence posets should be easier than general height-2 posets.


\begin{figure}
\begin{center}
        \begin{tikzpicture}
          [font=\scriptsize,
          node/.style={shape=circle,draw=black,fill=white!20, text=black,minimum width=0.5cm,thick},
          arc/.style={->,>=stealth,thick},
          edge/.style={thick}]

            \node (1) [node] at (-1,0) {1};
            \node (2) [node] at (1,0) {2};
            \node (3) [node] at (3,1.5) {3};
            \node (4) [node] at (3,-1.5)  {4};

         \draw [edge] (1) to node [auto] { } (2);
         \draw [edge] (2) to node [auto] { } (3);
         \draw [edge] (3) to node [auto] { } (4);
         \draw [edge] (2) to node [auto] { } (4);


            \node (5) [draw] at (5,0) {$x_1$};
            \node (6) [draw] at (8,0) {$x_2$};
            \node (7) [draw] at (11,2) {$x_3$};
            \node (8) [draw] at (11,-2)  {$x_4$};
    \node (56) [draw] at (6.5,0) {$y_{12}$};
    \node (67) [draw] at (9.5,1)  {$y_{23}$};
    \node (78) [draw] at (11,0)  {$y_{23}$};
    \node (68) [draw] at (9.5,-1)  {$y_{23}$};

         \draw [arc] (5) to node [auto] { } (56);
         \draw [arc] (6) to node [auto] { } (56);
         \draw [arc] (6) to node [auto] { } (67);
         \draw [arc] (7) to node [auto] { } (67);
         \draw [arc] (7) to node [auto] { } (78);
         \draw [arc] (8) to node [auto] { } (78);
         \draw [arc] (6) to node [auto] { } (68);
         \draw [arc] (8) to node [auto] { } (68);

        \end{tikzpicture}
\end{center}
\caption{$G$ and $\mathcal{DC}(G)$}  \label{covergraph}
\end{figure}
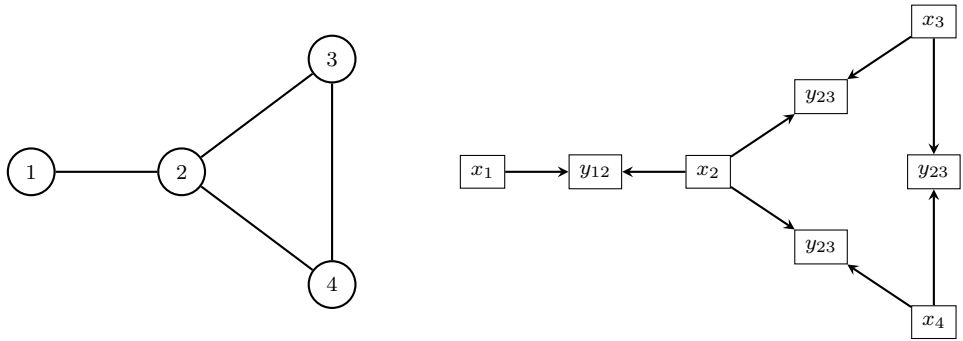


\section{Stars}\label{sec:stars}

Let $S_m$ denote a star with $m \geq 1$ edges.

\begin{lem}\label{lem:starLE}
For $m\geq 1$,
\[
e(\scrV(S_m),\prec) = 2^m(m!)^2.
\]
\end{lem}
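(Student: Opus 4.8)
The plan is to compute $e(\scrV(S_m),\prec)$ directly by a counting argument (the factor $2^{-|E|}$ of Theorem~\ref{volO_to_volQ} is only needed later when we pass to $\vol(\scrQ(S_m))$). Relabel so that the center of $S_m$ is vertex $0$ and the leaves are $1,\dots,m$; then $\scrV(S_m)=\{x_0,x_1,\dots,x_m\}\cup\{y_1,\dots,y_m\}$, writing $y_i$ for $y_{0i}$, has $2m+1$ elements, and the only cover relations are $y_i\prec x_0$ and $y_i\prec x_i$ for $i=1,\dots,m$.

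The first observation is that in any linear extension all $m$ of the $y_i$ must precede $x_0$. Hence if $x_0$ occupies position $m+1+s$ counting from the bottom, where $0\le s\le m$, then the $m+s$ elements below $x_0$ consist of $y_1,\dots,y_m$ together with exactly $s$ of the leaf variables, and the $m-s$ elements above $x_0$ are the remaining leaf variables. I would then check that a linear extension is determined, bijectively, by three independent choices: (a) which $s$ leaves have their variables below $x_0$, giving $\binom{m}{s}$ possibilities; (b) an (unconstrained) ordering of the $m-s$ above-block leaf variables, giving $(m-s)!$; and (c) an ordering of the $m+s$ below-block elements respecting $y_i\prec x_i$ for exactly the $s$ leaves $i$ placed below $x_0$. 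The below-block poset is a disjoint union of $s$ two-element chains and $m-s$ singletons, so it has $(m+s)!/2^{s}$ linear extensions. What makes the three choices independent is precisely that $y_i\prec x_0$ always holds, and $y_i\prec x_i$ holds automatically whenever leaf $i$ is in the above-block.

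Multiplying and summing over the rank of the center gives
\[
e(\scrV(S_m),\prec)\;=\;\sum_{s=0}^{m}\binom{m}{s}(m-s)!\,\frac{(m+s)!}{2^{s}}\;=\;m!\sum_{s=0}^{m}\frac{(m+s)!}{s!\,2^{s}}\;=\;(m!)^2\sum_{s=0}^{m}\binom{m+s}{s}\frac{1}{2^{s}},
\]
using $\binom{m}{s}(m-s)!=m!/s!$ and $(m+s)!/(s!\,m!)=\binom{m+s}{s}$. Thus the lemma is equivalent to the identity $\sum_{s=0}^{m}\binom{m+s}{s}2^{-s}=2^{m}$.

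That identity is the only real obstacle, and it is classical. I would prove $\sum_{s=0}^{m}\binom{m+s}{s}2^{-s}=2^{m}$ by induction on $m$ using Pascal's rule, with the cancellation supplied by $\binom{2m+2}{m+1}=2\binom{2m+1}{m+1}$; alternatively, $\sum_{s=0}^{m}\binom{m+s}{s}2^{-(m+s+1)}$ is the probability that, tossing a fair coin, the $(m+1)$-st head arrives by toss $2m+1$ (equivalently, at least $m+1$ of the first $2m+1$ tosses are heads), which is $\tfrac12$ by the heads/tails symmetry, and rearranging gives the claim. Either way one concludes $e(\scrV(S_m),\prec)=2^{m}(m!)^2$. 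As a sanity check, for $m=1$ the poset has a unique minimum below two incomparable maximal elements, with $2=2^{1}(1!)^2$ linear extensions, and $m=2$ gives $8=2^{2}(2!)^2$, matching a direct enumeration.
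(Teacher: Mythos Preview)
Your proof is correct and follows essentially the same approach as the paper: both condition on the position of $x_0$, count the arrangements on each side, and reduce to the identity $\sum_{s=0}^{m}\binom{m+s}{s}2^{-s}=2^{m}$ (equivalently $\sum_{\ell=0}^{m}\binom{m+\ell}{\ell}2^{-(m+\ell)}=1$), which both then justify via the fair-coin probability interpretation. The paper works with reverse linear extensions and indexes by the number $i$ of leaf variables to the left of $x_0$ (your $i=m-s$), arriving at the same sum; your decomposition of the below-block as a disjoint union of $s$ two-chains and $m-s$ singletons is a slightly tidier packaging of the paper's two factors, but the substance is identical.
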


\begin{proof}
Let the vertex set of the star be $V:=\{0,1,\ldots,m\}$, and let vertex $0$ be the center of the star.
For convenience, we count the reverse linear extensions of $e(\scrV(S_m),\prec)$, in which $x_0$ and $x_k$ appear to the left of $y_{0,k}$.

For $i = 0, 1, \dots,m$, the number of permissible permutations of $\scrV(S_m)$ in which $x_0$ appears in position $i+1$ and all other $x$ variables are ordered by index is given by
\[
\left(i! \binom{2m-i}{i} \right) \left(\frac{ (2(m-i))!}{2^{m-i}(m-i)!} \right).
\]
The first factor counts the number of ways to place $y_{0,1}, y_{0,2}, \dots, y_{0,i}$ into the $2m-i$ positions to the right of $x_0$ in no particular order.  The second factor counts the placement of the pairs $x_k, y_{0,k}$, for $i+1 \leq k \leq m,$ in the remaining $2(m-i)$ positions to the right of $x_0$ with $x_k \prec y_{0,k}$ and the $x_k$ ordered by index.

Incorporating all possible positions of $x_0$ and permutations of the $x_k, y_{0,k}$ pairs for $1 \leq k \leq m$, we obtain
\begin{align*}
e(\scrV(S_m),\prec)  &~=~ m! \sum_{i=0}^m  i! \binom{2m-i}{i} \left(\frac{ (2(m-i))!}{2^{m-i}(m-i)!} \right) \\
&~=~ m! \sum_{i=0}^m \frac{(2m-i)!}{(m-i)!2^{m-i}} \\
&~=~ 2^m (m!)^2 \sum_{i=0}^m \binom{2m-i}{m-i} \frac{1}{2^{2m-i}} \\
&~=~ 2^m (m!)^2 \sum_{\scrl=0}^m \binom{m+\scrl}{\scrl} \frac{1}{2^{m+\scrl}}.
\end{align*}
To see that $\sum_{\scrl=0}^m \binom{m+\scrl}{\scrl} \frac{1}{2^{m+\scrl}} = 1$, rewrite the summation as
\[
\sum_{\scrl=0}^m \binom{m+\scrl}{\scrl}\left[ \left(\frac{1}{2}\right)^{m+1}\left(\frac{1}{2}\right)^{\scrl} + \left(\frac{1}{2}\right)^{m+1}\left(\frac{1}{2}\right)^{\scrl}\right],
\]
which represents the probability that a fair coin will land on the same side (heads or tails) exactly $m+1$ times somewhere between flips $m+1$ and $2m+1$.  This identity is a special case (in which $x= \frac{1}{2}$) of an identity apparently due to Gosper  (see \cite[Item 42]{HAKMEM1972}).
\end{proof}

Combining Lemma \ref{lem:starLE} and Theorem \ref{volO_to_volQ}, we obtain the following result.
\begin{thm}\label{thm:starvol}
For $m \geq 1$,
\[
\vol(\scrQ(S_m)) ~=~
\frac{\vol(\scrO(S_m))}{2^m} ~=~
\frac{(m!)^2}{(2m+1)!}.
\]
\end{thm}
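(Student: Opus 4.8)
The plan is to derive Theorem~\ref{thm:starvol} as a direct corollary of Lemma~\ref{lem:starLE} and Theorem~\ref{volO_to_volQ}, with essentially no new combinatorial input. First I would recall that the incidence poset of $S_m$ lives on $\scrV(S_m)$ with $|\scrV(S_m)| = |V| + |E| = (m+1) + m = 2m+1$, so the ambient dimension in Theorem~\ref{volO_to_volQ} is $d = 2m+1$ and the number of edges is $|E| = m$. Plugging these into Theorem~\ref{volO_to_volQ} gives
\[
\vol(\scrQ(S_m)) ~=~ \frac{\vol(\scrO(S_m))}{2^m} ~=~ \frac{e(\scrV(S_m),\prec)}{(2m+1)!\,2^m}.
\]

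Next I would substitute the closed form $e(\scrV(S_m),\prec) = 2^m(m!)^2$ from Lemma~\ref{lem:starLE} into the rightmost expression. The factor $2^m$ in the numerator cancels exactly against the $2^m$ in the denominator, leaving
\[
\vol(\scrQ(S_m)) ~=~ \frac{2^m(m!)^2}{(2m+1)!\,2^m} ~=~ \frac{(m!)^2}{(2m+1)!},
\]
and similarly $\vol(\scrO(S_m)) = e(\scrV(S_m),\prec)/(2m+1)! = 2^m(m!)^2/(2m+1)!$, which is exactly $2^m$ times the expression for $\vol(\scrQ(S_m))$, matching the middle term of the claimed chain of equalities.

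There is no real obstacle here: the only thing to be careful about is bookkeeping the dimension correctly, namely that $d = 2m+1$ (one $x$ variable per vertex, $m+1$ vertices, plus one $y$ variable per edge, $m$ edges) rather than $d = n + \binom{n}{2}$ — but this is precisely the reduction licensed by Lemma~\ref{crossproduct} and the surrounding discussion, which lets us drop the spurious $y_{ij}$ variables for non-edges. All the genuine work has already been done in proving Lemma~\ref{lem:starLE} (in particular the Gosper-type identity $\sum_{\ell=0}^m \binom{m+\ell}{\ell} 2^{-(m+\ell)} = 1$); the theorem statement is just the arithmetic consequence of combining that lemma with Theorem~\ref{volO_to_volQ}.
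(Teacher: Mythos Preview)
Your proposal is correct and matches the paper's own approach exactly: the paper simply states that the theorem follows by combining Lemma~\ref{lem:starLE} with Theorem~\ref{volO_to_volQ}, and you have written out the straightforward arithmetic that makes this explicit.
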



\section{Paths}\label{sec:paths}

We will see that the so-called odd ``Euler numbers''
appear in the formulae for the volumes of polytopes associated with paths (and cycles).
There are several
closely related sequences that are called ``Euler numbers'', so there can be considerable confusion.
Following \cite{Stanley2012},
an \emph{alternating permutation} of $\{1, 2,\ldots, k\}$ is a permutation so that each entry is alternately greater or less than the preceding entry. The \emph{Euler number} $A_k$, $k\geq 1$, is the number of such alternating permutations,
and \emph{Andr\'e's problem} is   determining the $A_k$.
For small values, we have
\[
A_0:=1,~A_1=1,~A_2=1,~A_3=2,~A_4=5,~A_5=16,~A_6=61,~A_7=272.
\]

Even-indexed Euler numbers are also called \emph{zig numbers}, and the odd-indexed ones are called \emph{zag numbers}.
Also, the even-indexed ones are called \emph{secant numbers}, and the odd-indexed ones are called \emph{tangent numbers}.
The latter names come from Andr\'e who established the following pretty result.
\begin{thm}[D. Andr\'e, \cite{andre}]\label{thm:andre}
The Maclaurin series for $\sec(x)$ is $\sum_{m=0}^\infty \frac{A_{2m}}{(2m)!}x^{2m}$, and the
Maclaurin series for $\tan(x)$ is $\sum_{m=0}^\infty \frac{A_{2m+1}}{(2m+1)!}x^{2m+1}$, both having
radius of convergence $\pi/2$.
\end{thm}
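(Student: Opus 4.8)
The plan is to follow the classical route of André: first obtain a quadratic recurrence for the Euler numbers $A_n$, then translate it into an ordinary differential equation for the exponential generating function $y(x):=\sum_{n\ge 0}A_nx^n/n!$, solve that ODE in closed form, and finally separate the even and odd parts. For Step~1 I would first fix the convention that $A_n$ counts the ``up-down'' alternating permutations $a_1<a_2>a_3<\cdots$ of $[n]$ with $A_0=A_1=1$, and note that complementation $i\mapsto n+1-i$ is a bijection between up-down and down-up alternating permutations of $[n]$, so each type is counted by $A_n$. I would then prove $2A_{n+1}=\sum_{k=0}^{n}\binom{n}{k}A_kA_{n-k}$ for $n\ge1$ by a split-at-the-maximum argument: for $n\ge1$ the left side is the total number of alternating permutations of $[n+1]$ of either type, and in any such permutation the entry $n+1$ sits at a local maximum, say in position $k+1$; the $k$ entries to its left form an alternating permutation of some $k$-subset of the remaining $n$ values ($\binom{n}{k}$ choices) and the $n-k$ entries to its right form an alternating permutation of the complementary set, and the complementation bijection makes the two sub-counts equal to $A_k$ and $A_{n-k}$ irrespective of parity. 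Summing over $k$ (and checking $2A_1=1+A_0^2$ separately) gives the recurrence.

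For Step~2, multiplying the recurrence by $x^n/n!$ and summing, the Cauchy-product form of $y^2$ yields $2y'=1+y^2$ with $y(0)=A_0=1$, the isolated ``$+1$'' absorbing the $n=0$ term. Separating variables gives $2\arctan y=x+C$ with $C=2\arctan 1=\pi/2$, hence $y(x)=\tan\!\big(\tfrac{x}{2}+\tfrac{\pi}{4}\big)$, and a short half-angle computation rewrites this as $y(x)=\tfrac{1+\sin x}{\cos x}=\sec x+\tan x$. (Alternatively, one verifies directly that $f(x):=\sec x+\tan x$ satisfies $2f'=1+f^2$ and $f(0)=1$, and appeals to uniqueness of solutions to this initial value problem.)

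For Step~3, since $\sec$ is even and $\tan$ is odd, extracting the even and odd parts of $y(x)=\sum_{n\ge0}A_nx^n/n!$ immediately produces $\sec x=\sum_{m\ge0}\frac{A_{2m}}{(2m)!}x^{2m}$ and $\tan x=\sum_{m\ge0}\frac{A_{2m+1}}{(2m+1)!}x^{2m+1}$. For the radius of convergence, $\sec$ and $\tan$ are holomorphic on $\{|x|<\pi/2\}$ and each has a genuine pole at $x=\pi/2$ (equivalently, the solution $\tan(\tfrac{x}{2}+\tfrac{\pi}{4})$ first blows up when $\tfrac{x}{2}+\tfrac{\pi}{4}=\tfrac{\pi}{2}$), so by the standard theory of power series of holomorphic functions each Maclaurin series has radius of convergence exactly $\pi/2$.

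I expect the only place requiring genuine care to be Step~1: pinning down the up-down versus down-up convention and checking that the split-at-the-maximum bookkeeping, together with the edge cases $k\in\{0,1\}$ and the small values $n=0,1$, really delivers the symmetric convolution $\sum_k\binom{n}{k}A_kA_{n-k}$ rather than a parity-restricted variant. The analytic content of Steps~2 and~3 is routine; if one prefers, the recurrence in Step~1 may instead be quoted from Stanley's survey on alternating permutations, and the separation of variables in Step~2 may be replaced by the direct verification that $f=\sec+\tan$ solves $2f'=1+f^2$.
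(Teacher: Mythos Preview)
Your proposal is a correct and complete proof along the classical Andr\'e line; the recurrence $2A_{n+1}=\sum_{k}\binom{n}{k}A_kA_{n-k}$ via the split-at-the-maximum bijection, the resulting ODE $2y'=1+y^2$, its solution $y=\sec x+\tan x$, and the parity separation and pole analysis are all standard and sound. Note, however, that the paper does not prove this theorem at all: it is stated as a classical result with attribution to Andr\'e (and later to Stanley's survey for the asymptotic consequence), and is simply invoked where needed. So there is no ``paper's own proof'' to compare against; your write-up supplies exactly the argument one would expect to find behind the citation.
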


Note that Euler worked with the odd-indexed ones, and defined them not combinatorially but rather
via the Maclaurin series for $\tan(x)$ (see \cite{Stanley2012}).

Let $P_m$ denote a path with $m\geq 0$ edges.

\begin{lem} \label{le_path}
For $m\geq 0$,
\[
e(\scrV(P_m),\prec) = A_{2m+1}.
\]
\end{lem}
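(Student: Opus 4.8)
The plan is to set up a bijection (or a recursive counting argument) between the linear extensions of the incidence poset $(\scrV(P_m),\prec)$ and the alternating permutations counted by $A_{2m+1}$. The poset $\scrV(P_m)$ has $2m+1$ elements: the vertex elements $x_0,x_1,\dots,x_m$ and the edge elements $y_{i-1,i}$ for $i=1,\dots,m$, with the only relations being $y_{i-1,i}\prec x_{i-1}$ and $y_{i-1,i}\prec x_i$. So each $y$-element is a minimal element covered by exactly its two neighboring $x$-elements, and the cover graph $\mathcal{C}(P_m)$ is itself a path on $2m+1$ vertices, alternating $x_0,y_{01},x_1,y_{12},x_2,\dots,y_{m-1,m},x_m$. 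A linear extension is an assignment of the labels $1,\dots,2m+1$ (or equivalently a total order) to these $2m+1$ elements so that each $y$ comes before both of its neighbors.

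**First I would** observe that in any linear extension, reading the elements along the cover path $x_0,y_{01},x_1,\dots,x_m$ and recording whether the rank goes up or down from one element to the next, each $y$-element is a local minimum (both its path-neighbors are $x$'s that must exceed it), and consequently each interior $x$-element is a local maximum. Thus the rank sequence along the path is down-up-down-up-$\cdots$, i.e. it is precisely an alternating (more specifically a ``valley-first'' zigzag) pattern on $2m+1$ positions. Conversely, any permutation of $\{1,\dots,2m+1\}$ whose pattern along the path positions $1,2,\dots,2m+1$ is alternating in this way automatically satisfies every poset relation, since the relations are exactly ``each even-indexed path position is smaller than its two neighbors.'' This gives a bijection between $e(\scrV(P_m),\prec)$ and the set of alternating permutations of length $2m+1$ of the prescribed (odd-length, valley-at-both-ends) shape. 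Since $2m+1$ is odd, the up-down and down-up alternating permutations of this length are equinumerous (reverse-complement symmetry), and their common count is the Euler number $A_{2m+1}$; this matches the small values, e.g.\ $m=0$ gives $A_1=1$, $m=1$ gives $A_3=2$, $m=2$ gives $A_5=16$.

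**Alternatively**, one can avoid the bijection and argue recursively: deleting the ``last'' pendant structure (the edge $y_{m-1,m}$ and vertex $x_m$) and tracking where $x_m$ and $y_{m-1,m}$ sit relative to the rest yields a recurrence for $e(\scrV(P_m),\prec)$, which one then checks is satisfied by $A_{2m+1}$ using the classical recurrence $2A_{n+1}=\sum_{k}\binom{n}{k}A_kA_{n-k}$ for the Euler numbers (the same identity Andr\'e's theorem encodes via $\sec+\tan$). I expect the bijective route to be cleaner, so I would lead with it.

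**The main obstacle** is being careful about the boundary behavior at the two ends of the path and about the odd length: one must verify that the correct notion of ``alternating'' here is the one with valleys at both endpoints (forced because $x_0$ and $x_m$ cover $y_{01}$ and $y_{m-1,m}$ respectively, so the endpoints are \emph{not} local minima but their neighbors are), and then justify that the number of such sequences of odd length $2m+1$ is exactly $A_{2m+1}$ rather than some shifted index. A clean way to pin this down is to note that a zigzag permutation of odd length $2k+1$ starting with a descent has the same count as one ending with a descent (reverse it), and both equal the tangent number $A_{2k+1}$; combined with Andr\'e's theorem (Theorem~\ref{thm:andre}) this closes the argument. Once the shape is correctly identified, the remaining verification is routine.
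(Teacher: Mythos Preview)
Your proposal is correct and follows essentially the same route as the paper: identify the cover graph of the incidence poset of $P_m$ as a path on $2m+1$ vertices, observe that linear extensions are exactly the permutations $\phi$ with $\phi(x_0)>\phi(y_{01})<\phi(x_1)>\cdots<\phi(x_m)$, and conclude that these are counted by the tangent number $A_{2m+1}$. One small wording slip: in your ``main obstacle'' paragraph you write ``valleys at both endpoints,'' but as you yourself note immediately afterward the endpoints $x_0,x_m$ are \emph{not} local minima---the pattern has peaks at both ends (down-up-down-up starting from $x_0$), which is exactly what you described earlier and what the paper uses.
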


\begin{proof}
Figure \ref{pathcover} is a diagram of $\mathcal{C}(P_m)$.  The number $e(\scrV(P_m),\prec)$ counts the number of permutations $\phi$ of
\[
\{x_1,~ y_{1,2}, ~x_2, ~y_{2,3}, ~x_3, ~\dots, ~x_{m}, ~y_{m, m+1}, ~x_{m+1}\},
\]
such that
\[
\phi(x_1) > \phi(y_{1,2}) < \phi(x_2) > \phi(y_{2,3}) < \dots  > \phi(y_{m,m+1}) < \phi(x_{m+1}).
\]

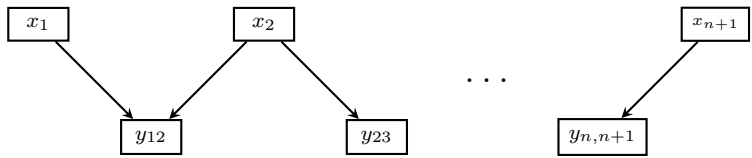
\begin{figure}
\begin{center}
        \begin{tikzpicture}
          [font=\scriptsize,
          node/.style={draw=black,fill=white!20, text=black,minimum width=0.8cm,thick},
          arc/.style={->,>=stealth,thick},
          edge/.style={thick}]
	   /.style={text=black, minimum width=0.5cm, thick}

            \node (x1) [node] at (0,0) {$x_1$};
            \node (x2) [node] at (3,0) {$x_2$};
          \node (xn1) [node] at (9,0)  {\tiny $x_{n+1}$};
   	    \node (y12) [node] at (1.5,-1.5) {$y_{12}$};
   	    \node (y23) [node] at (4.5,-1.5) {$y_{23}$};

   	    \node (ynn1) [node] at (7.5,-1.5) {$y_{n,n+1}$};

         \draw [arc] (x1) to node [auto] { } (y12);
         \draw [arc] (x2) to node [auto] { } (y12);
         \draw [arc] (x2) to node [auto] { } (y23);
         \draw [arc] (xn1) to node [auto] { } (ynn1);
	 \node  [draw=white] at (6, -.75) {\large $\dots$};
        \end{tikzpicture}
\end{center}
\caption{$\mathcal{C}(P_m)$}  \label{pathcover}
\end{figure}
\noindent In other words, $e(\scrV(P_m), \prec)$ is precisely the number of  alternating permutations  of a $(2m+1)$-element set, which is given by the odd Euler number, $A_{2m+1}$.
\end{proof}

Combining Lemma \ref{le_path} and Theorems \ref{le_to_volO} and \ref{volO_to_volQ}, we obtain the following   result.

\begin{thm} \label{thm:pathvol}
For $m\geq 0$,
\[
\vol(\scrQ(P_m)) ~=~  \frac{\vol(\scrO(P_m))}{2^m}  ~=~ \frac{A_{2m+1}}{2^m(2m+1)!}.
\]
\end{thm}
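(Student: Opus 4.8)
The plan is to combine the two structural results already in hand. First I would invoke Theorem~\ref{le_to_volO} to write $\vol(\scrO(P_m)) = e(\scrV(P_m),\prec)/d!$ where $d = |V(P_m)| + |E(P_m)| = (m+1) + m = 2m+1$. Then I would apply Lemma~\ref{le_path}, which identifies $e(\scrV(P_m),\prec)$ with the odd Euler number $A_{2m+1}$, to conclude that $\vol(\scrO(P_m)) = A_{2m+1}/(2m+1)!$. Finally, I would apply Theorem~\ref{volO_to_volQ}, which gives $\vol(\scrQ(G)) = \vol(\scrO(G))/2^{|E|}$ for any simple graph $G$; with $|E(P_m)| = m$ this yields $\vol(\scrQ(P_m)) = \vol(\scrO(P_m))/2^m = A_{2m+1}/(2^m(2m+1)!)$, and the displayed chain of equalities falls out immediately.

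The only genuine content is bookkeeping: one must be careful that the ambient dimension $d$ used in Theorems~\ref{le_to_volO} and~\ref{volO_to_volQ} is the reduced dimension $|V| + |E|$ (not $|V| + \binom{|V|}{2}$), which is legitimate here because, as noted after Lemma~\ref{crossproduct}, the $y_{ij}$ variables for non-edges only contribute unit multipliers to the volume. For the path $P_m$ this gives $d = 2m+1$, and the edge count is $m$, so both the $d!$ in the denominator and the $2^{|E|} = 2^m$ factor are pinned down. One should also handle the degenerate base case $m=0$ (a single vertex, no edges): there $\scrV(P_0) = \{x_1\}$, $e(\scrV(P_0),\prec) = 1 = A_1$, $d = 1$, and the formula reads $\vol(\scrQ(P_0)) = 1/(1\cdot 1) = 1$, consistent with the unit interval $[0,1]$.

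I expect no real obstacle here — the theorem is a corollary, as the text ("Combining Lemma~\ref{le_path} and Theorems~\ref{le_to_volO} and~\ref{volO_to_volQ}, we obtain the following result") already signals. If anything, the subtlest point is one step earlier, in the proof of Lemma~\ref{le_path}: one must trust that the cover graph $\mathcal{C}(P_m)$ has the zig-zag ``fence'' shape of Figure~\ref{pathcover}, so that a linear extension of the incidence poset corresponds exactly to an alternating permutation of the $2m+1$ underlying elements $x_1, y_{12}, x_2, y_{23}, \dots, x_{m+1}$. Granting that lemma, the proof of Theorem~\ref{thm:pathvol} is a two-line substitution.

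\begin{proof}
By Theorem~\ref{le_to_volO}, $\vol(\scrO(P_m)) = e(\scrV(P_m),\prec)/d!$, where $d = |V(P_m)| + |E(P_m)| = (m+1)+m = 2m+1$. Lemma~\ref{le_path} gives $e(\scrV(P_m),\prec) = A_{2m+1}$, so $\vol(\scrO(P_m)) = A_{2m+1}/(2m+1)!$. Finally, by Theorem~\ref{volO_to_volQ}, $\vol(\scrQ(P_m)) = \vol(\scrO(P_m))/2^{|E(P_m)|} = \vol(\scrO(P_m))/2^m = A_{2m+1}/\bigl(2^m(2m+1)!\bigr)$.
\end{proof}
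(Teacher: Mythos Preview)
Your proof is correct and follows exactly the route the paper takes: the theorem is stated as a direct combination of Lemma~\ref{le_path} with Theorems~\ref{le_to_volO} and~\ref{volO_to_volQ}, and your write-up simply makes the substitution explicit (with the dimension $d=2m+1$ and edge count $m$ spelled out). There is nothing to add.
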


Because of Andr\'e's Theorem, we can see the volumes of the boolean quadric polytopes corresponding to paths in the McLaurin series expansion of tangent.

\begin{cor}
\[
\sum_{m \geq 0} 2^m \vol(\scrQ(P_m))x^{2m+1} ~=~ \tan(x).
\]
\end{cor}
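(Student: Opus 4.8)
The plan is to chain together Theorem~\ref{thm:pathvol} with Andr\'e's Theorem (Theorem~\ref{thm:andre}). From Theorem~\ref{thm:pathvol} we have, for each $m\geq 0$,
\[
2^m\vol(\scrQ(P_m)) ~=~ \frac{A_{2m+1}}{(2m+1)!}.
\]
So I would simply multiply both sides by $x^{2m+1}$ and sum over $m\geq 0$:
\[
\sum_{m\geq 0} 2^m \vol(\scrQ(P_m))\, x^{2m+1}
~=~ \sum_{m\geq 0} \frac{A_{2m+1}}{(2m+1)!}\, x^{2m+1}
~=~ \tan(x),
\]
where the last equality is exactly the tangent half of Theorem~\ref{thm:andre}. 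That is the entire content of the corollary.

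The only thing requiring a word of care is convergence and the right to rearrange/identify the sum termwise. Andr\'e's Theorem already asserts that the series $\sum_{m\geq 0} \frac{A_{2m+1}}{(2m+1)!} x^{2m+1}$ is the Maclaurin series of $\tan(x)$ with radius of convergence $\pi/2$, so there is nothing new to prove here: the equality holds as an identity of formal power series and, for $|x|<\pi/2$, as an identity of analytic functions. I would state the corollary with this understood (either formally, or on the interval $(-\pi/2,\pi/2)$, matching the convention of Theorem~\ref{thm:andre}).

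There is no real obstacle — the step that does the work is Theorem~\ref{thm:pathvol}, which in turn rests on Lemma~\ref{le_path} (the linear-extension count $e(\scrV(P_m),\prec)=A_{2m+1}$) and Theorems~\ref{le_to_volO}--\ref{volO_to_volQ}. Given those, the corollary is a one-line substitution into the series of Theorem~\ref{thm:andre}. The most one might call an ``obstacle'' is purely presentational: deciding whether to phrase the generating-function identity formally or analytically, and making sure the indexing ($m\geq 0$, exponent $2m+1$, denominator $(2m+1)!$, and the special case $m=0$ giving the leading term $x$) lines up exactly with Andr\'e's statement. Since $A_1=1$ and $\vol(\scrQ(P_0))=\vol(\scrQ(K_1))=1$ (the single $x$-variable ranges over $[0,1]$, with no $y$-variables), the $m=0$ term on the left is $x$, matching the leading term of $\tan(x)$, which is a useful sanity check to record.
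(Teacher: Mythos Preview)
Your proposal is correct and matches the paper's approach exactly: the paper states the corollary as an immediate consequence of Andr\'e's Theorem (Theorem~\ref{thm:andre}) once Theorem~\ref{thm:pathvol} gives $2^m\vol(\scrQ(P_m)) = A_{2m+1}/(2m+1)!$, which is precisely the substitution you carry out.
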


\section{Cycles}\label{sec:cycles}

In this section we obtain a first success at fully analyzing a
situation where $\scrQ(G)$ is different from $\scrP(G)$.  Let $C_m$ be a simple cycle with $m\geq 3$ edges.

\begin{thm}[Kreweras, \cite{Kreweras}]\label{lem:cycleLE}
For $m \geq 3$,
\[
e(\scrV(C_m),\prec) = mA_{2m-1}.
\]
\end{thm}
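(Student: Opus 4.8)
The plan is to count the linear extensions of $(\scrV(C_m),\prec)$ by locating the largest element and then cutting the cycle open so as to reduce everything to the path formula of Lemma~\ref{le_path}. First I would note that $\mathcal{C}(C_m)$ is a cycle on $2m$ vertices in which the $x$-vertices and $y$-vertices alternate: every $y_{i,i+1}$ is covered by exactly its two neighbours $x_i,x_{i+1}$, and every $x_i$ covers exactly its two neighbours $y_{i-1,i},y_{i,i+1}$ (indices mod $m$). Consequently a linear extension is the same thing as a bijection $\phi\colon\scrV(C_m)\to\{1,\dots,2m\}$ in which every $x$-vertex receives a value larger than both of its (necessarily $y$-)neighbours; equivalently, a ``cyclically alternating'' labelling of the $2m$-cycle with the $x$'s as peaks and the $y$'s as valleys. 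In particular the value $2m$ can only be assigned to an $x$-vertex.

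Next I would exploit the rotational symmetry. Rotating $C_m$ by one step induces a poset automorphism of $(\scrV(C_m),\prec)$, so the cyclic group $\mathbb{Z}_m$ acts on the set of linear extensions; since a linear extension is injective, no nontrivial rotation can fix one, the action is free, and every orbit has size exactly $m$. Each orbit contains precisely one linear extension whose maximum value $2m$ sits on a prescribed vertex, say $x_m$. Hence
\[
e(\scrV(C_m),\prec) ~=~ m\cdot N,
\]
where $N$ is the number of linear extensions with $\phi(x_m)=2m$.

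To compute $N$ I would cut the cycle at $x_m$. Deleting $x_m$ destroys only the two cover relations $y_{m-1,m}\prec x_m$ and $y_{m,1}\prec x_m$, and both of these are automatically satisfied as soon as $x_m$ carries the global maximum; therefore $N$ equals the number of linear extensions of the subposet induced on $\scrV(C_m)\setminus\{x_m\}$. That subposet is a fence (zigzag) on $2m-1$ elements whose two endpoints $y_{m,1}$ and $y_{m-1,m}$ are minimal, i.e., it has $m$ minimal and $m-1$ maximal elements; this is exactly the order dual of $\scrV(P_{m-1})$, the incidence poset of the path with $m-1$ edges. A poset and its order dual have the same number of linear extensions, so Lemma~\ref{le_path} gives $N = e(\scrV(P_{m-1}),\prec) = A_{2(m-1)+1} = A_{2m-1}$, and therefore $e(\scrV(C_m),\prec)=mA_{2m-1}$, as claimed.

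The difficulty here is not depth but bookkeeping: one must correctly identify the orientation and the two endpoints of the fence obtained after the cut, confirm that relaxing the two deleted cover relations neither creates nor destroys any linear extension (which is precisely the claim that those two relations are forced by maximality of the value at $x_m$), and verify that the $\mathbb{Z}_m$-action on linear extensions is free so that the factor $m$ is exact rather than an overcount. This is essentially Kreweras's argument; an alternative route would be a transfer-matrix computation around the cycle, but it is messier and ultimately still leans on the path (tangent-number) evaluation.
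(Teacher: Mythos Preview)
Your argument is correct. The identification of linear extensions with cyclically alternating labellings, the free $\mathbb{Z}_m$-action via graph rotation, the cut at the location of the maximum value, and the recognition of the residual $(2m-1)$-element fence as the order dual of $\scrV(P_{m-1})$ are all sound; the bookkeeping you flag (that the two deleted covers are forced by maximality, and that freeness makes the factor $m$ exact) is handled properly.

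The paper's own proof is simply a one-line citation: it observes that $e(\scrV(C_m),\prec)$ counts the cyclically alternating permutations of length $2m$ and then invokes Kreweras for the value $mA_{2m-1}$. You, by contrast, actually \emph{reprove} Kreweras's enumeration from scratch by cutting the cycle open and reducing to Lemma~\ref{le_path}. So the route is genuinely different in presentation---self-contained rather than by reference---though, as you yourself note, the underlying idea (locate the maximum, break the cycle, reduce to a fence) is exactly Kreweras's. What your approach buys is that a reader need not chase the citation; what the paper's approach buys is brevity and a clear attribution of a classical result.
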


\begin{proof}
The number of cyclically alternating permutations of length $2m$ is precisely $mA_{2m-1}$; see
\cite{Kreweras}.
\end{proof}

Combining Theorem \ref{lem:cycleLE} and Theorem \ref{volO_to_volQ}, we obtain the following result.

\begin{thm} \label{vol_QCm}
For $m \geq 3$,
\[
\vol(\scrQ(C_m)) ~=~
\frac{\vol(\scrO(C_m))}{2^m} ~=~
\frac{mA_{2m-1}}{2^m(2m)!}.
\]
\end{thm}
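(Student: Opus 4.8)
The plan is to combine the two results cited immediately above the statement, exactly as the sentence ``Combining Theorem~\ref{lem:cycleLE} and Theorem~\ref{volO_to_volQ}'' advertises. First I would invoke Theorem~\ref{lem:cycleLE}, which gives $e(\scrV(C_m),\prec) = mA_{2m-1}$ for $m \geq 3$. Then I would apply Theorem~\ref{volO_to_volQ} with $G = C_m$: since a cycle on $m$ edges has $|V| = m$ vertices and $|E| = m$ edges, the relevant dimension is $d = |V| + |E| = 2m$. Substituting into $\vol(\scrQ(G)) = \vol(\scrO(G))/2^{|E|} = e(\scrV(G),\prec)/(d!\, 2^{|E|})$ yields
\[
\vol(\scrQ(C_m)) ~=~ \frac{\vol(\scrO(C_m))}{2^m} ~=~ \frac{e(\scrV(C_m),\prec)}{(2m)!\,2^m} ~=~ \frac{mA_{2m-1}}{2^m(2m)!},
\]
which is precisely the claimed formula. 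The first equality is the $2^{|E|}$-scaling relation between $\scrQ$ and $\scrO$, and the middle step additionally uses $\vol(\scrO(C_m)) = e(\scrV(C_m),\prec)/(2m)!$ from Theorem~\ref{le_to_volO}.

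There is essentially no obstacle here: the statement is a direct corollary, so the ``proof'' is a two-line substitution. The only thing worth double-checking is the bookkeeping of $d$ --- one must be careful that for the cycle $C_m$ we genuinely have $n = m$ vertices (not $m+1$, as one might momentarily confuse with the path $P_m$), so that $d = 2m$ and the $2^{|E|} = 2^m$ factor is correct. I would also note that the hypothesis $m \geq 3$ is exactly what is needed for $C_m$ to be a simple graph (so that Theorems~\ref{le_to_volO} and \ref{volO_to_volQ} apply) and for Kreweras's count of cyclically alternating permutations to be valid.

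If one wished to be slightly more self-contained, one could recall that the incidence poset $(\scrV(C_m),\prec)$ has cover graph $\mathcal{C}(C_m)$ equal to a cycle of length $2m$ (each edge of $C_m$ subdivided by its $y$-vertex), so that a linear extension corresponds to a cyclically alternating permutation of a $2m$-element set --- this is the content of Theorem~\ref{lem:cycleLE} as proved just above --- but this is not strictly necessary for the present statement, whose proof is simply the algebraic composition of the cited formulas.
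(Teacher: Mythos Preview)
Your proposal is correct and matches the paper's approach exactly: the paper's ``proof'' is simply the sentence ``Combining Theorem~\ref{lem:cycleLE} and Theorem~\ref{volO_to_volQ}, we obtain the following result,'' and you have spelled out precisely that two-line substitution, including the correct bookkeeping that $C_m$ has $m$ vertices and $m$ edges so $d = 2m$.
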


Combining Theorems \ref{vol_QCm} and \ref{thm:pathvol}, we obtain the following result.
\begin{cor}
For $m \geq 3$,
\[
\vol(\scrQ(C_m)) ~=~
\vol(\scrQ(P_{m-1}))\big/ 4.
\]
\end{cor}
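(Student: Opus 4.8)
The plan is to prove this by direct substitution of the two closed-form expressions already established. From Theorem~\ref{vol_QCm} we have $\vol(\scrQ(C_m)) = mA_{2m-1}/\bigl(2^m(2m)!\bigr)$, and from Theorem~\ref{thm:pathvol}, applied with the path parameter taken to be $m-1$ (legitimate since $m\geq 3$ forces $m-1\geq 0$), we have
\[
\vol(\scrQ(P_{m-1})) ~=~ \frac{A_{2(m-1)+1}}{2^{m-1}(2(m-1)+1)!} ~=~ \frac{A_{2m-1}}{2^{m-1}(2m-1)!}.
\]
The only point requiring care is the index bookkeeping: the Euler-number index $2(m-1)+1$ simplifies to $2m-1$, which is exactly the index appearing in the cycle formula, so both volumes are governed by the same number $A_{2m-1}$ and can only differ by a rational constant depending on $m$.

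The algebraic core of the argument is then the factorization $(2m)! = (2m)\cdot(2m-1)!$ together with $m/(2m) = 1/2$. Substituting these into the cycle formula gives
\[
\vol(\scrQ(C_m)) ~=~ \frac{mA_{2m-1}}{2^m\,(2m)\,(2m-1)!} ~=~ \frac{A_{2m-1}}{2^{m+1}(2m-1)!},
\]
while dividing the path expression by $4 = 2^2$ yields
\[
\frac{1}{4}\vol(\scrQ(P_{m-1})) ~=~ \frac{A_{2m-1}}{2^2\cdot 2^{m-1}(2m-1)!} ~=~ \frac{A_{2m-1}}{2^{m+1}(2m-1)!}.
\]
The two right-hand sides coincide, which is the claimed identity.

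There is essentially no obstacle here beyond the index shift just flagged; the statement is a routine consequence of the preceding volume formulas. If desired, one can instead present the identity conceptually at the level of linear-extension counts: Kreweras's count $e(\scrV(C_m),\prec) = mA_{2m-1}$ from Theorem~\ref{lem:cycleLE} and the path count $e(\scrV(P_{m-1}),\prec) = A_{2m-1}$ from Lemma~\ref{le_path} differ by the factor $m$, and feeding both through the normalization $e(\scrV(G),\prec)/(d!\,2^{|E|})$ of Theorem~\ref{volO_to_volQ} — with $d!\,2^{|E|} = (2m)!\,2^m$ for $C_m$ versus $(2m-1)!\,2^{m-1}$ for $P_{m-1}$ — produces the overall ratio $m/(2m\cdot 2) = 1/4$, giving the same conclusion.
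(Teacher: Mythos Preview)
Your proof is correct and follows exactly the approach indicated in the paper, which simply states that the corollary is obtained by combining Theorems~\ref{vol_QCm} and~\ref{thm:pathvol}. Your detailed algebraic verification (and the alternative phrasing via linear-extension counts) merely makes explicit the routine substitution the paper leaves to the reader.
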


 Padberg made a careful analysis of the facets of $\scrP(G)$
when $G$ is a cycle (see \cite{Padberg1989}). We summarize the relevant parts in the remainder of this paragraph.
Let $G=(V,E)$ be a simple graph containing simple cycle $C = (V(C), E(C))$.
 Let $A$ be an odd cardinality subset of $E(C)$ and define
\begin{align*}
	S_0 &~:=~ \{i \in V(C): i \text{ is incident to no elements of } A\}; \\
	S_1 &~:=~ \{i \in V: i \notin V(C) \text{ or } i \text{ is incident to exactly $1$ element of } A\}; \\
	S_2 &~:=~ \{i \in V(C): i \text{ is incident to $2$ elements of } A\}.
\end{align*}
Note that $V = S_0 \cup S_1 \cup S_2$.
\begin{thm}[Padberg \cite{Padberg1989}]\label{thm:oddpad}
The \emph{odd cycle inequality} $OC(A)$,
\begin{align}
	\sum_{i \in S_2}x_i ~-~ \sum_{i \in S_0}x_i ~+ \sum_{(i,j) \in E\backslash A}y_{ij} ~- \sum_{(i,j) \in A} y_{ij} 		&~\leq~ \Bigl\lfloor |A|/2 \Bigr\rfloor,  \tag{F4}\label{oddcyclefacet}
\end{align}
is a valid inequality of $\scrP(G)$ and cuts off the vertex $\bfv[0]$ of $\scrQ(G)$ given by
\begin{align*}
	x_i &= \textstyle \frac{1}{2}, \text{ for } i \in V, \\
	y_{ij} &= \textstyle \frac{1}{2}, \text{ for } (i,j) \in E\backslash A, \text{ and}\\
	y_{ij} &= 0, \text{ for } (i,j) \in A \cup (E\backslash E(C)).
\end{align*}
Moreover, when $G$ is a cycle, $\scrP(G)$ is the solution set of \ref{facet0}-\ref{oddcyclefacet}, all describing unique facets.
\end{thm}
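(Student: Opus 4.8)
The plan is to route everything through the covariance transformation that linearly identifies $\scrP(G)$ with the cut polytope $\mathrm{CUT}(G+u)$ of the suspension $G+u$ (the linear bijection recalled in \S\ref{sec:intro}; see \cite{BQ4,KLS1997}), which sends $(x,y)$ to the cut vector $z$ with $z_{ui}=x_i$ and $z_{ij}=x_i+x_j-2y_{ij}$, together with the observation that when $G$ is a cycle $C_m$ the suspension $W:=C_m+u$ is a wheel, hence planar, hence has no $K_5$-minor. Three assertions must be settled: (a) $OC(A)$ is valid for $\scrP(G)$; (b) $OC(A)$ is violated by $\bfv[0]$; and (c) when $G=C_m$, $\scrP(C_m)$ is exactly the solution set of \ref{facet0}--\ref{oddcyclefacet}, with each inequality describing a facet.

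For (a) it suffices to check $OC(A)$ at the binary generators $x\in\{0,1\}^{V}$, $y_{ij}=x_ix_j$ of $\scrP(G)$. Substituting these and using the incidence identities $|S_1\cap V(C)|+2|S_2|=2|A|$ and $|S_0|+|S_1\cap V(C)|+|S_2|=|V(C)|$ shows that $OC(A)$ is precisely the covariance-image of the classical odd-cycle inequality $\sum_{e\in A}z_e-\sum_{e\in E(C)\setminus A}z_e\le|A|-1$ for $\mathrm{CUT}(G+u)$, which is valid because a cut meets a cycle in an even number of edges, so $A$ cannot be entirely cut with $E(C)\setminus A$ entirely uncut. For (b), substituting $\bfv[0]$ into $OC(A)$ and using the same two identities gives a left-hand-side value of $\tfrac12|A|$; since $|A|$ is odd this exceeds $\lfloor|A|/2\rfloor$ by exactly $\tfrac12$, so $\bfv[0]$ is cut off.

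For (c), take $G=C_m$. First, $\scrP(C_m)$ is full-dimensional in $\mathbb{R}^{n+m}$: the origin, the $n$ points with the single coordinate $x_i=1$, and the $m$ points with $x_i=x_j=y_{ij}=1$ for $(i,j)\in E$ are $2m+1$ affinely independent members of $\scrP(C_m)$. Next, $W=C_m+u$ is a wheel, which is planar and hence $K_5$-minor-free; by the theorem of Barahona and Mahjoub, $\mathrm{CUT}(W)$ is described by the box constraints $0\le z_e\le 1$ together with all cycle inequalities, and a cycle inequality is facet-defining exactly when its cycle is chordless. The chordless cycles of $W$ are exactly the triangles $\{u,i,j\}$ for $(i,j)\in E(C_m)$ and the rim $C_m$: a cycle through $u$ using a path of length $\ge2$ in $C_m$ has a spoke to an interior path vertex as a chord, while a cycle avoiding $u$ lies in the subgraph induced on $V(C_m)$, which is $C_m$ itself. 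Moreover $W$ is bridgeless, so the box constraints are implied by the cycle inequalities and are redundant.

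Finally, translate the chordless-cycle inequalities of $\mathrm{CUT}(W)$ back through the covariance map: for a triangle $\{u,i,j\}$ with $(i,j)\in E(C_m)$, the odd edge-set of size $3$ gives \ref{facet3} and the three odd edge-sets of size $1$ give \ref{facet0}, \ref{facet1}, \ref{facet2}; for the rim and odd $A\subseteq E(C_m)$ one gets exactly $OC(A)$. Since $0\le x_i\le1$ are already implied by \ref{facet0}--\ref{facet3} (from $x_i\ge y_{ij}\ge0$ and $x_i\le 1+y_{ij}-x_j\le1$), it follows that $\scrP(C_m)$ is the solution set of \ref{facet0}--\ref{oddcyclefacet}, each of these is facet-defining, and --- $\scrP(C_m)$ being full-dimensional --- each facet-describing inequality is unique up to positive scaling. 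I expect the only real work to be bookkeeping: invoking the Barahona--Mahjoub results for both the description and the facet characterization, correctly listing the chordless cycles of the wheel, and tracking the covariance substitution so that the four triangle inequalities land on \ref{facet0}--\ref{facet3} and the rim inequalities on $\{OC(A)\}$. A more self-contained variant replaces Barahona--Mahjoub by a direct chord-elimination induction (a cycle inequality with a chord is the sum of two shorter cycle inequalities), but this just re-derives standard cut-polytope facts; either way the geometric heart of the proof is that the suspension of a cycle is a planar wheel.
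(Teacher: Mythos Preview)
The paper does not give its own proof of this theorem: it is stated as a result of Padberg (\cite{Padberg1989}) and used as input for the subsequent volume calculations in \S\ref{sec:cycles}. So there is no in-paper argument to compare against.

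Your route is correct and is in fact the clean modern way to obtain Padberg's cycle result: pass through the covariance map to $\mathrm{CUT}(G+u)$, observe that for $G=C_m$ the suspension is a wheel (planar, hence $K_5$-minor-free), invoke the Barahona--Mahjoub description of $\mathrm{CUT}$ by chordless-cycle inequalities, enumerate the chordless cycles of the wheel (the $m$ triangles and the rim), and translate back. Your identification of the four triangle inequalities with \ref{facet0}--\ref{facet3} and the rim inequalities with $\{OC(A)\}$ is accurate, and the check that $\bfv[0]$ gives left-hand side $|A|/2>\lfloor|A|/2\rfloor$ is correct.

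Two small points. First, the theorem also asserts that $\bfv[0]$ is a \emph{vertex} of $\scrQ(G)$, which you do not address; for $G=C_m$ this amounts to checking that the $2m$ constraints tight at $\bfv[0]$ (namely \ref{facet0} and \ref{facet3} for each edge in $A$, and \ref{facet1} and \ref{facet2} for each edge in $E(C)\setminus A$) are linearly independent --- this is exactly the non-degeneracy recorded in Lemma~\ref{cutoffsimplex}. Second, your covariance computation actually yields the sum over $E(C)\setminus A$, not $E\setminus A$ as printed; the printed version is not valid for general $G$ once there are edges outside $C$ (and the specification of $\bfv[0]$ is likewise inconsistent on such edges), so you are implicitly correcting a typo. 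None of this affects the case $G=C_m$, which is all the paper needs downstream.
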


In fact, $\scrP(G)$ is
completely described by \ref{facet0}-\ref{oddcyclefacet}
for every series-parallel graph $G$ (see \cite[Theorem 10]{Padberg1989}).

Next, we look carefully at the parts of $\scrQ(G)$
cut off by distinct odd cycle inequalities arising from a single simple cycle $C$ of $G$.
We will see that they are disjoint, and for the special case that $G = C$, the cut off parts all have the same
volume which we can calculate.  Note that this is a similar behavior
to so-called ``clipping inequalities'' applied to the standard unit hypercube (see \cite{CopperLee}).

\begin{lem} \label{lem:disj}
Let $G$ be a simple graph containing simple cycle $C$.  Let $A$ and $B$ be distinct odd-cardinality subsets of $E(C)$.  The odd cycle inequalities $OC(A)$ and $OC(B)$ remove disjoint portions of $\scrQ(G)$.
\end{lem}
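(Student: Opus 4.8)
The plan is to reduce to the case $G=C$, to reparametrize the odd cycle inequalities in terms of the slacks $t_e:=x_i+x_j-2y_{ij}$ attached to the cycle edges $e=(i,j)\in E(C)$, and then to reach a contradiction simply by adding the two supposedly-violated inequalities and invoking a parity fact about $|A\,\triangle\,B|$. For the reduction, note that $OC(A)$ and $OC(B)$ involve only coordinates indexed by $V(C)$ and $E(C)$ (the sets $S_0,S_1,S_2$ are determined purely by the incidences of vertices with edges of $A\subseteq E(C)$, resp.\ $B\subseteq E(C)$), so a point of $\scrQ(G)$ violates $OC(A)$ if and only if its projection onto those coordinates does; and that projection lies in $\scrQ(C)$, since \ref{facet0}--\ref{facet3} for the edges of $C$ constrain only those coordinates. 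Hence I may assume $G=C$. Writing $t_e=(x_i-y_{ij})+(x_j-y_{ij})$ and also $t_e=(x_i+x_j-1-y_{ij})+(1-y_{ij})$, the inequalities \ref{facet0}--\ref{facet3} give $0\le t_e\le 1$ for every point of $\scrQ(C)$.

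Next I would re-express $OC(S)$, for an arbitrary odd-cardinality $S\subseteq E(C)$, in terms of the $t_e$. On the left-hand side of $OC(S)$ the coefficient of $x_i$ equals $\deg_S(i)-1$, where $\deg_S(i)$ is the number of edges of $S$ meeting $i$ (this is $1,0,-1$ according as $i\in S_2,S_1,S_0$), so the $x$-part of that left-hand side is $\sum_{(i,j)\in S}(x_i+x_j)-\sum_{i\in V(C)}x_i$. Since $C$ is $2$-regular, $\sum_{i\in V(C)}x_i=\tfrac{1}{2}\sum_{(i,j)\in E(C)}(x_i+x_j)$, and folding this together with the $y$-part of $OC(S)$ shows that its whole left-hand side equals $\tfrac{1}{2}(\sum_{e\in S}t_e-\sum_{e\in E(C)\setminus S}t_e)$. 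Because $|S|$ is odd, $2\lfloor|S|/2\rfloor=|S|-1$; therefore a point $p\in\scrQ(C)$ \emph{violates} $OC(S)$ precisely when $\sum_{e\in S}t_e-\sum_{e\in E(C)\setminus S}t_e>|S|-1$.

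Now suppose, for contradiction, that some $p\in\scrQ(C)$ violates both $OC(A)$ and $OC(B)$. Adding the two resulting strict inequalities, each $t_e$ with $e\in A\,\triangle\,B$ gets total coefficient $0$ and cancels, while $t_e$ receives coefficient $+2$ for $e\in A\cap B$ and $-2$ for $e\in E(C)\setminus(A\cup B)$; the sum therefore reads $2\sum_{e\in A\cap B}t_e-2\sum_{e\in E(C)\setminus(A\cup B)}t_e>|A|+|B|-2$. Bounding the left-hand side above by $2|A\cap B|$ using $0\le t_e\le 1$ yields $|A\,\triangle\,B|=|A|+|B|-2|A\cap B|<2$. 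But $|A|$ and $|B|$ are both odd, so $|A\,\triangle\,B|$ is even; and $A\ne B$ forces $|A\,\triangle\,B|\ge 1$, hence $|A\,\triangle\,B|\ge 2$ --- a contradiction. Thus no point of $\scrQ(G)$ violates both inequalities, which is the claim.

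The one step that needs real care is the bookkeeping in the second paragraph --- pinning down the factor $\tfrac{1}{2}$ and correctly using the $2$-regularity of $C$ --- and I do not foresee a genuine obstacle beyond that: once $OC(S)$ has been put in the form $\sum_{e\in S}t_e-\sum_{e\in E(C)\setminus S}t_e>|S|-1$ (for violation), adding the two inequalities and counting the parity of $|A\,\triangle\,B|$ finishes the argument at once.
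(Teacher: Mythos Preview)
Your proof is correct and follows essentially the same line as the paper's: sum the two violated odd-cycle inequalities and combine with \ref{facet0}--\ref{facet3} to reach a contradiction. Your edge-slack substitution $t_e=x_i+x_j-2y_{ij}$ packages the computation more cleanly --- it compresses \ref{facet0}--\ref{facet3} into the box constraint $0\le t_e\le 1$ and lets the $2$-regularity of $C$ handle the $x$-cancellation in one stroke, whereas the paper adds specific instances of \ref{facet1}, \ref{facet2}, \ref{facet3} by hand after summing to kill the $x$-terms. Your parity finish ($|A\triangle B|$ is even and nonzero, hence $\ge 2$) is also a tidier way to close the strict inequality than the paper's chain $\tfrac{|A|+|B|}{2}\ge\min\{|A|,|B|\}\ge|A\cap B|$.
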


\begin{proof}
Let $A$ and $B$ be distinct odd-cardinality subsets of $E(C)$.
Let $S_0, ~S_1$, and $S_2$ be as defined above for $A$.  Let $T_0,~ T_1,$ and $T_2$ be the corresponding subsets of $V$ related to $B$.
Suppose that $(\hat{x},\hat{y})\in \scrQ(G)$ violates both $OC(A)$ and $OC(B)$. That is,
\[
	\sum_{i \in S_2}\hat{x}_i ~-~ \sum_{i \in S_0}\hat{x}_i ~+ \sum_{(i,j) \in E\backslash A}\hat{y}_{ij} ~- \sum_{(i,j) \in A}\hat{y}_{ij} 		~>~ \Bigl\lfloor |A|/2 \Bigr\rfloor
\]
and
\[
	\sum_{i \in T_2}\hat{x}_i ~-~ \sum_{i \in T_0}\hat{x}_i ~+ \sum_{(i,j) \in E\backslash B}\hat{y}_{ij} ~- \sum_{(i,j) \in B} \hat{y}_{ij} 		~>~ \Bigl\lfloor |B|/2 \Bigr\rfloor.
\]
Adding these inequalities, canceling terms with opposite signs, we have
\begin{multline} \label{sumofOCs}
\sum_{i \in S_2 \cap T_1}\hat{x}_i ~+~ \sum_{i \in S_1 \cap T_2}\hat{x}_i ~+~ 2\sum_{i \in S_2 \cap T_2}\hat{x}_i ~-~ 2\sum_{(i,j) \in A \cap B}\hat{y}_{ij} \\
~-~ \sum_{i \in S_0 \cap T_1}\hat{x}_i  ~-~ \sum_{i \in S_1 \cap T_0}\hat{x}_i ~-~ 2\sum_{i \in S_0 \cap T_0}\hat{x}_i ~+~ 2\sum_{(i,j) \in E\backslash (A \cup B)}\hat{y}_{ij} \\
 ~>~ \frac{|A|+|B|}{2} -1.
\end{multline}

Because $(\hat{x},\hat{y})\in \scrQ(G)$, we can employ $\ref{facet0}-\ref{facet3}$. Starting with \eqref{sumofOCs},
add $\hat{y}_{ij} - \hat{x}_i -\hat{x}_j \geq -1$ for $(i,j) \in A \cap B$, and 
$\hat{x}_i - \hat{y}_{ij} \geq 0$ and
$\hat{x}_j - \hat{y}_{ij} \geq 0$
for $(i,j) \in E(C)\backslash(A\cup B)$.  All of the $\hat{x}$ terms cancel out and we obtain
\begin{align} \label{simplifiedOCs}
 \sum_{(i,j) \in A \cap B}\hat{y}_{ij}   ~>~ \frac{|A|+|B|}{2} - |A \cap B|.
\end{align}
We can apply non-negativity of the $\hat{y}_{ij}$ to eliminate the remaining $\hat{y}_{ij}$ terms, arriving at
%
\begin{align}
0 ~>~  \frac{|A|+|B|}{2} - |A \cap B| ~\geq~ \min\left\{|A|,|B|\right\} - |A \cap B| ~\geq~ 0,
\end{align}
%
a contradiction. Therefore, we cannot have
$(\hat{x},\hat{y})\in \scrQ(G)$ violating both $OC(A)$ and $OC(B)$.
\end{proof}

The following characterization of the  subset of $\scrQ(C_m)$ that is removed by an odd-cycle inequality is apparent from the proof of Theorem 9 in \cite{Padberg1989}.
\begin{lem} \label{cutoffsimplex}
(Padberg \cite{Padberg1989})  Let $A$ be any odd-cardinality subset of the edges of $C_m$.  Let $v^0$ denote the fractional vertex of $\scrQ(C_m)$ removed by $OC(A)$ (as described in Theorem \ref{thm:oddpad}).
Let the polytope $W$ be the closure of the set removed from $\scrQ(C_m)$ by $OC(A)$.

 The vertex $v^0$ is non-degenerate, and $W$  has the following properties:
\begin{itemize}
	\item the $2m+1$ vertices of $W$ are $v^0$ and the $2m$ integer vertices of $\scrP(C_m)$ that satisfy $OC(A)$ with equality;
	\item the $2m+1$ facets of $W$ are the ones described by $OC(A)$ and the set of  inequalities $\{y_{ij} \geq 0, ~x_i + x_j \leq y_{ij} + 1: (i,j) \in A\} \cup \{y_{ij} \leq x_i, ~y_{ij} \leq x_j: (i,j) \in E(C_m)\backslash A\}$;
	\item (hence) $W$ is a $2m$-dimensional simplex (in $\mathbb{R}^{2m}$).
\end{itemize}
\end{lem}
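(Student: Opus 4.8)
The plan is to reconstruct the local computation in the proof of Padberg's Theorem~9 (our Theorem~\ref{thm:oddpad}). Write $\ell := \sum_{i\in S_2}x_i - \sum_{i\in S_0}x_i + \sum_{(i,j)\in E\backslash A}y_{ij} - \sum_{(i,j)\in A}y_{ij}$ for the left-hand side of $OC(A)$, so that $OC(A)$ reads $\ell\le\lfloor|A|/2\rfloor$ and $W = \scrQ(C_m)\cap\{\ell\ge\lfloor|A|/2\rfloor\}$. Since $C_m$ has $m$ vertices and $m$ edges, the ambient space is $\mathbb{R}^{2m}$, and $\scrQ(C_m)$ is full-dimensional there. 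Substituting $v^0$, and using that the odd set $A$ cuts $C_m$ into an odd number of arcs, I would record two facts: (a) the inequalities among \ref{facet0}--\ref{facet3} holding with equality at $v^0$ are exactly the $2m$ inequalities $\{y_{ij}\ge 0,\ x_i+x_j\le y_{ij}+1:(i,j)\in A\}\cup\{y_{ij}\le x_i,\ y_{ij}\le x_j:(i,j)\in E(C_m)\backslash A\}$ named in the statement; and (b) $\ell(v^0)=|A|/2$ whereas $\lfloor|A|/2\rfloor=(|A|-1)/2$, so $OC(A)$ is violated at $v^0$ by exactly $\tfrac12$.

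Next I would prove $v^0$ non-degenerate. Setting the $2m$ tight inequalities in (a) to equalities, the resulting square linear system has $v^0$ as its unique solution: along each maximal run of edges in $E(C_m)\backslash A$ the equations $y_{ij}=x_i=x_j$ force one common value on all $x$-coordinates in that run; the equations $y_{ij}=0$, $x_i+x_j=1$ for $(i,j)\in A$ link consecutive runs by $t\mapsto 1-t$; traversing the cycle once, through the odd number $|A|$ of flips, forces that common value to be $\tfrac12$, which is $v^0$. A square system with a unique solution has a nonsingular matrix, so those $2m$ constraint normals are linearly independent and $v^0$ is a simple vertex of $\scrQ(C_m)$ (in particular $\scrQ(C_m)$ is full-dimensional). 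Let $\mathcal{K}$ be the simplicial cone with apex $v^0$ defined by exactly those $2m$ inequalities; since each of them is one of the defining inequalities of $\scrQ(C_m)$, we get $\scrQ(C_m)\subseteq\mathcal{K}$ and hence $W\subseteq\mathcal{K}\cap\{\ell\ge\lfloor|A|/2\rfloor\}$.

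Now I would trace the $2m$ edges of $\scrQ(C_m)$ at $v^0$, where the $k$-th edge is obtained by relaxing the $k$-th of these tight inequalities while keeping the other $2m-1$ tight. The key claim, which I would verify by computing the $2m$ edge directions explicitly, is that each such edge terminates at an integral point $w^k$; such a point is a vertex of $\scrP(C_m)$ (the binary points of $\scrQ(C_m)$ are exactly the binary solutions of $x_ix_j=y_{ij}$) and satisfies $\ell(w^k)=\lfloor|A|/2\rfloor$. Then each $w^k$ lies in $\scrP(C_m)\subseteq\scrQ(C_m)$ with $\ell(w^k)=\lfloor|A|/2\rfloor$, so $w^k\in W$, giving $\operatorname{conv}\{v^0,w^1,\dots,w^{2m}\}\subseteq W$. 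Conversely $w^k$ sits on the $k$-th extreme ray of $\mathcal{K}$, along which the affine function $\ell$ descends from $|A|/2$ and first meets $\lfloor|A|/2\rfloor$ precisely at $w^k$; therefore $\mathcal{K}\cap\{\ell\ge\lfloor|A|/2\rfloor\}=\operatorname{conv}\{v^0,w^1,\dots,w^{2m}\}$, and with $W\subseteq\mathcal{K}\cap\{\ell\ge\lfloor|A|/2\rfloor\}$ we conclude $W=\operatorname{conv}\{v^0,w^1,\dots,w^{2m}\}$. Since $w^k-v^0=t_k d_k$ with $t_k>0$ and $d_1,\dots,d_{2m}$ the linearly independent edge directions at $v^0$, these $2m+1$ vertices are affinely independent, so $W$ is a $2m$-simplex in $\mathbb{R}^{2m}$; its facets are the hyperplane $\{\ell=\lfloor|A|/2\rfloor\}$ (opposite $v^0$) and, for each $k$, the $k$-th tight hyperplane from (a) (opposite $w^k$, since along that edge every tight inequality except the $k$-th stays tight). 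Finally, $v^0$ is non-degenerate in $W$ because it is simple in $\scrQ(C_m)$.

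The step I expect to be the main obstacle is the claim in the last paragraph that each of the $2m$ edges of $\scrQ(C_m)$ at $v^0$ terminates at an integral vertex of $\scrP(C_m)$ on the hyperplane of $OC(A)$ --- rather than reaching some other fractional vertex of $\scrQ(C_m)$ first, or crossing that hyperplane strictly inside an edge. I would settle this by computing the $2m$ edge directions at $v^0$ from the now-known system of tight constraints and checking, case by case, which facet of $\scrQ(C_m)$ is met next, the cases depending on whether the relaxed constraint involves an edge of $A$ or of $E(C_m)\backslash A$ and on the position of that edge relative to the arcs cut out by $A$. This bookkeeping is entirely local to the cycle and is exactly the computation carried out in the proof of Padberg's Theorem~9, from which the lemma is stated to follow.
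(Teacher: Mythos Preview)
The paper does not actually prove this lemma; it simply attributes the result to Padberg, stating that the characterization ``is apparent from the proof of Theorem~9 in \cite{Padberg1989}.'' Your reconstruction is correct and follows the natural line one would extract from Padberg: identify the $2m$ constraints of $\scrQ(C_m)$ tight at $v^0$, show their normals are linearly independent (your cycle-traversal argument exploiting the odd parity of $|A|$ is exactly the right mechanism), and then truncate the resulting simplicial cone $\mathcal{K}$ by the $OC(A)$ hyperplane. Your sandwich $\operatorname{conv}\{v^0,w^1,\dots,w^{2m}\}\subseteq W\subseteq\mathcal{K}\cap\{\ell\ge\lfloor|A|/2\rfloor\}=\operatorname{conv}\{v^0,w^1,\dots,w^{2m}\}$ is sound once the integral endpoints $w^k$ and the equalities $\ell(w^k)=\lfloor|A|/2\rfloor$ are established.

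On the step you flag as the main obstacle --- verifying that each edge of $\scrQ(C_m)$ at $v^0$ terminates at a $0/1$ point lying on the $OC(A)$ hyperplane --- it is worth noting that the paper carries out precisely this coordinate computation, but in the proof of the \emph{next} result, Lemma~\ref{cutoffhalf}. There, Tables~\ref{tab:ij_values} and~\ref{tab:kl_values} list, for each relaxed facet, the coordinates of the corresponding integral vertex (called $v^{(i,j)_a}$ or $v^{(i,j)_b}$), with the case split depending on whether the relevant edges lie in $A$ or in $E(C_m)\backslash A$. That analysis is phrased as \emph{using} the present lemma rather than proving it, but the same case work, combined with your edge-direction formulation, gives a self-contained argument. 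So your proposal is both correct and faithful to what the paper (via Padberg) is invoking.
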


\begin{lem} \label{cutoffhalf}
The volume removed from $\scrQ(C_m)$ by a single odd-cycle constraint is $\frac{1}{2(2m)!}$.
\end{lem}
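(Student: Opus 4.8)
The plan is to use Lemma~\ref{cutoffsimplex}, which already tells us that the closure $W$ of the region removed by $OC(A)$ is a $2m$-dimensional simplex in $\mathbb{R}^{2m}$ whose $2m+1$ vertices are the fractional apex $v^0$ together with the $2m$ integer vertices of $\scrP(C_m)$ lying on the hyperplane $\{OC(A) = \lfloor |A|/2 \rfloor\}$. Thus $\vol(W)$ equals $\tfrac{1}{(2m)!}$ times the absolute value of the determinant whose rows are the $2m$ vectors $v - w$, for $w$ one fixed vertex of $W$ and $v$ running over the others; so the whole proof reduces to writing these $2m+1$ points down explicitly and evaluating one such determinant. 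To keep the point list short I would first normalize $A$.

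\emph{Reduction to $|A| = 1$.} The switching map at a vertex $k$ --- which replaces $x_k$ by $1 - x_k$ and $y_{kj}$ by $x_j - y_{kj}$ for each edge $(k,j)$, leaving all other coordinates fixed --- is an affine involution of $\mathbb{R}^{2m}$ with Jacobian $\pm 1$, hence volume-preserving; a one-line check on \ref{facet0}--\ref{facet3} shows it carries $\scrQ(C_m)$ onto itself and permutes the facets \ref{facet0}--\ref{facet3}, so it also permutes the facets \ref{oddcyclefacet}, sending $OC(A)$ to $OC(A \triangle \delta(k))$, where $\delta(k)$ is the two-element edge cut at $k$. Since the cut space of a cycle is exactly the even-weight subspace of $\mathbb{F}_2^{E(C_m)}$, the cuts $\delta(k)$ move any odd-cardinality $A$ to any other --- in particular to $\{(1,2)\}$ --- so it suffices to prove the claim for $A = \{(1,2)\}$.

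For $A = \{(1,2)\}$, the apex $v^0$ has $x \equiv \tfrac12$, $y_{12} = 0$, and $y_{ij} = \tfrac12$ on the remaining $m-1$ edges. For an integer vertex, put $U := \{i : x_i = 1\}$ (so $y_{ij} = x_ix_j$); a short case analysis on whether $1, 2 \in U$, using that a proper nonempty vertex set of $C_m$ induces a disjoint union of arcs, shows the equation $OC(\{(1,2)\}) = 0$ forces $U$ to be $\emptyset$, the full vertex set, or a single arc meeting $\{1,2\}$ in exactly one vertex; explicitly the integer vertices of $W$ are $\mathbf{0}$, $\mathbf{1}$, the incidence vector $p_r$ of the arc $\{2, \dots, r+1\}$, and the incidence vector $q_r$ of the arc $\{1\} \cup \{r+2, \dots, m\}$, for $r = 1, \dots, m-1$ --- $2m$ points, as Lemma~\ref{cutoffsimplex} requires. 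Taking $\mathbf{0}$ as the fixed vertex, we get $\vol(W) = \tfrac{1}{(2m)!}\,|\det N|$, where $N$ has rows $v^0, \mathbf{1}, p_1, \dots, p_{m-1}, q_1, \dots, q_{m-1}$; every row but the $v^0$ row is $0/1$, so $\vol(W) = \tfrac{1}{2(2m)!}\,|\det N'|$ with $N'$ the $0/1$ matrix obtained by doubling the $v^0$ row, and it remains to show $|\det N'| = 1$. Here the structure is rigid: $\mathbf{1} - 2v^0$ is the unit vector in coordinate $y_{12}$, and for each $r$ the vector $2v^0 - q_r - p_r$ is (up to sign) the unit vector in the $y$-coordinate of the cycle edge $(r+1, r+2)$ (indices mod $m$). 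After these $m$ unimodular row operations the $m$ affected rows become the distinct coordinate unit vectors $e_{y_{12}}, e_{y_{23}}, \dots, e_{y_{m1}}$, which are supported on the $m$ $y$-columns and vanish on the $x$-columns; deleting them and those columns leaves $\det N'$, up to sign, equal to $\det B$, where $B$ is the $m \times m$ $0/1$ matrix of the surviving rows $2v^0, p_1, \dots, p_{m-1}$ restricted to the $x$-columns --- that is, $B$ has all-ones first row and $(r+1)$st row equal to the indicator of $\{2, \dots, r+1\}$. Expanding $\det B$ along its first column leaves a lower-triangular matrix with unit diagonal, so $|\det B| = 1$ and $\vol(W) = \tfrac{1}{2(2m)!}$.

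The substantive step is the last one: verifying the two families of unit-vector identities --- which means writing out $y_{ij} = x_ix_j$ at each arc-vertex $p_r$, $q_r$ and checking the cancellations, boundary cases $r = 1$ and $r = m-1$ included --- and tracking the signs of all the row and column operations so that the only surviving numerical factor is the intended $2$ from doubling the $v^0$ row. The switching reduction is routine but relies on the standard fact that the cut space of $C_m$ is the full even-weight subspace of $\mathbb{F}_2^{E(C_m)}$, which should be recorded; alternatively one can skip switching and carry out the same vertex enumeration and determinant collapse for a general odd $A$, at the cost of a bulkier arc and case analysis.
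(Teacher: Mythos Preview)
Your proof is correct and takes a genuinely different route from the paper's. The paper works with a \emph{general} odd subset $A$ throughout: it takes the fractional apex $v^0$ as the base vertex, labels the $2m$ integer vertices by which of the $2m$ facets of $W$ they \emph{fail} to lie on, and then carries out a six-case row-reduction analysis (depending, for each pair of consecutive rows, on whether the corresponding adjacent edges lie in $A$ or $\bar A$) to reach an upper-triangular matrix with a single $\pm\tfrac12$ on the diagonal. Your argument instead first invokes the switching symmetry of $\scrQ(C_m)$ to reduce to $|A|=1$, then takes $\mathbf{0}$ as the base vertex and exploits the pleasant accident that, for $A=\{(1,2)\}$, the arc vertices come in complementary pairs $p_r,q_r$ with $U_{p_r}\sqcup U_{q_r}=V$; this is exactly what makes $2v^0-p_r-q_r$ collapse to a single $y$-coordinate unit vector and lets the determinant factor cleanly into an $m\times m$ lower-triangular block. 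What your approach buys is a much shorter and essentially case-free determinant calculation; what it costs is the reliance on the switching lemma (that the affine involution at $k$ permutes \ref{facet0}--\ref{facet3} and sends $OC(A)$ to $OC(A\triangle\delta(k))$) and on the fact that the vertex cuts $\delta(k)$ generate the even-weight subspace of $\mathbb{F}_2^{E(C_m)}$ --- both standard in the cut-polytope literature but external to the paper. The paper's proof is heavier in bookkeeping but fully self-contained.
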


\begin{proof}
For the $m$-cycle $C_m$, we define vertex set $V := \{1,2,\dots,m\}$ and edge set $E := \{(1,2), (2,3), \dots, (m-1,m),(m,1)\}$. In the context of this proof, we adopt the order of the coordinates of $\mathbb{R}^{2m}$
 to  match the order that the corresponding vertices and edges appear in $C_m$, starting at vertex 1 and ending at edge $(m,1)$.   For clarity, we also adopt the heavier notation $y_{(i,j)} ~(:= y_{ij})$ throughout this proof.  Specifically, we consider $v \in \mathbb{R}^{2m}$ as
\[
v := (x_1, ~y_{(1,2)},~ x_2,~ y_{(2,3)}, \dots, x_{m-1},~ y_{(m-1,m)},~ x_m,~ y_{(m,1)}).
\]

Suppose that $A$ is any odd-cardinality subset of
$E$. By Lemma \ref{cutoffsimplex}, the closure of the portion of $\scrQ(C_m)$ cut off by $OC(A)$ is a simplex, which we denote by $W$.

For each edge $(i,j)$ of $C_m$, there are four facets of $W$ (those described by inequalities of type \ref{facet0}-\ref{facet3}) that define the coordinates (of points in $\scrQ(C_m)$) corresponding to edge $(i,j)$ and its end nodes, $i$ and $j$: $y_{(i,j)}$, $x_i$, and $x_j$.  We introduce the notation $\mathcal{F}0_{(i,j)}$ to indicate the facet described by an inequality of type \ref{facet0} that corresponds  to edge $(i,j)$.  We define $\mathcal{F}1_{(i,j)}$, $\mathcal{F}2_{(i,j)}$, and $\mathcal{F}3_{(i,j)}$  similarly.

Lemma \ref{cutoffsimplex} also provides the complete facet description of $W$.
In particular, there are two facets of $W$ corresponding to each $(i,j) \in E$ chosen among $\mathcal{F}0_{(i,j)}$-$\mathcal{F}3_{(i,j)}$.  The choice depends on whether or not $(i,j)$ is in $A$.  For each edge $(i,j) \in A$, $W$ has the two facets $\mathcal{F}0_{(i,j)}$ (described by $y_{(i,j)} \geq 0$) and $\mathcal{F}3_{(i,j)}$ (described by $x_i + x_j \leq 1 + y_{(i,j)}$).  For each edge $(i,j) \in \bar{A} := E \backslash A$, $W$ has the two facets $\mathcal{F}1_{(i,j)}$ (described by $y_{(i,j)} \leq x_i$) and $\mathcal{F}2_{(i,j)}$ (described by $y_{(i,j)} \leq x_j$).  These $2m$ facets, along with the facet described by $OC(A)$, are all of the facets of $W$.

As with any full-dimensional simplex, every extreme point of $W$ is defined by the intersection of all but one of its facets.  In this way, there is a one-to-one correspondence between the extreme points of $W$ and the facets of $W$.  We say that extreme point $v$ of $W$ \emph{arises by relaxing the facet $W_v$}, where $W_v$ is the single facet of $W$ that does not contain $v$.

Let $v^0$ be the extreme point of $W$ that arises by relaxing the facet described by $OC(A)$; i.e., $W_{v^0}$ is the facet described by $OC(A)$.  As noted above for facets, there are two extreme points of $W$ relating to each $(i,j) \in E$, which we denote $v^{(i,j)_{a}}$ and $v^{(i,j)_{b}}$.  It will be important later to know exactly which facet is relaxed to obtain each of these extreme points, and so we define the following:

\noindent{if $(i,j) \in A$,}
\[
W_{v^{(i,j)_{a}}} =  \mathcal{F}0_{(i,j)} ~~~\text{ and }~~~ W_{v^{(i,j)_{b}}} = \mathcal{F}3_{(i,j)};
\]
if $(i,j) \in \bar{A}$,
\[
W_{v^{(i,j)_{a}}} = \mathcal{F}1_{(i,j)}  ~~~\text{ and }~~~ W_{v^{(i,j)_{b}}} = \mathcal{F}2_{(i,j)}.
\]

We can express the volume of the simplex $W$ via the well-known formula
\[
\vol(W) = \frac{1}{(2m)!} | \det (M) |,
\]
where
\[
M:=
\left[
\begin{array}{c}
(v^{(1,2)_{a}}-v^0)' \\
(v^{(1,2)_{b}}-v^0)' \\
(v^{(2,3)_{a}}-v^0)' \\
\vdots \\
(v^{(m,1)_{b}}-v^0)'
\end{array}
\right],
\]
where $v'$ represents the transpose of column vector $v$.
Our task is to show that $\det(M) = \pm 1/2$.  In particular, we will define an upper-triangular matrix $\tilde{M}$ obtained from $M$ via certain elementary row operations that leave the determinant unchanged (replacing a row with the sum of itself and a scalar multiple of another row), and show that $\det(M) = \det(\tilde{M})= \pm 1/2$.

We turn our focus to the extreme points of $W$, which determine the rows of $M$.  For precision, we introduce superscripts to the coordinates of the extreme points of $W$ matching the
superscripts of the extreme points; e.g., $v^{(1,2)_a} := $
\[
(x^{(1,2)_a}_1, ~y^{(1,2)_a}_{(1,2)},~ x^{(1,2)_a}_2,~ y^{(1,2)_a}_{(2,3)}, \dots, x^{(1,2)_a}_{m-1},~ y^{(1,2)_a}_{(m-1,m)},~ x^{(1,2)_a}_m,~ y^{(1,2)_a}_{(m,1)})
\]
denotes the extreme point of $W$ that arises by relaxing either $\mathcal{F}0_{(1,2)}$ (if $(1,2) \in A$) or $\mathcal{F}1_{(1,2)}$ (if $(1,2) \in \bar{A}$).

By Theorem \ref{thm:oddpad}, the coordinates of $v^0$ are:
\begin{align*}
x^0_i &= \textstyle \frac{1}{2}, \text{ for } i \in V; \\[3pt]
y^0_{(i,j)} &= \left\{
	\begin{array}{ll}
		0, & \text{ for } (i,j) \in A; \\
		\frac{1}{2}, & \text{ for } (i,j) \in \bar{A}.
	\end{array} \right.
\end{align*}

By Lemma \ref{cutoffsimplex}, each of the extreme points of $W$ of the form $\bfv[(i,j)_a]$ or $\bfv[(i,j)_b]$, $(i,j) \in E$, is also an extreme point of $\scrP(C_m)$ and therefore 0/1-valued.  Focusing on the coordinates of $\bfv[(i,j)_a]$, we note that $x^{(i,j)_a}_i$, $y^{(i,j)_a}_{(i,j)}$, and $x^{(i,j)_a}_j$ must satisfy the inequality describing  $W_{\bfv[(i,j)_a]}$ with strict inequality and must satisfy the inequality describing $W_{\bfv[(i,j)_b]}$ as an equation.  A similar statement can be made about $\bfv[(i,j)_b]$, which must not be contained in $W_{\bfv[(i,j)_b]}$  and must be contained in $W_{\bfv[(i,j)_a]}$.  For every $(i,j) \in E$, the 0/1 values of the coordinates $x^{(i,j)_*}_i$, $y^{(i,j)_*}_{(i,j)}$, and $x^{(i,j)_*}_j$ of $\bfv[(i,j)_*]$ for $* \in \{a,b\}$, depend on whether or not $(i,j) \in A$ and are uniquely determined by the two facets $W_{\bfv[(i,j)_a]}$ and $W_{\bfv[(i,j)_b]}$ of $W$. Table \ref{tab:ij_values} gives these values.

\bigskip
\begin{minipage}{\linewidth}
\centering
\captionof{table}{$x^{(i,j)_*}_i$, $y^{(i,j)_*}_{(i,j)}$, $x^{(i,j)_*}_j$, $* \in \{a,b\}$} \label{tab:ij_values}
\renewcommand{\arraystretch}{1.8}
\begin{tabular}{c|c|c|c|c|c|c}
$(i,j)\in$  & $*$ & $\bfv[(i,j)_*]$ \emph{must satisfy}
& $x^{(i,j)_*}_i$ & $y^{(i,j)_*}_{(i,j)}$ & $x^{(i,j)_*}_j$ \\
\hline
\multirow{2}{*}{$A$} & $a$ &  $y^{(i,j)_a}_{(i,j)} > 0$,~ $x^{(i,j)_a}_i + x^{(i,j)_a}_j = 1+ y^{(i,j)_a}_{(i,j)} $ &  1 & 1 & 1 \\
& $b$ &  $y^{(i,j)_b}_{(i,j)} = 0$,~ $x^{(i,j)_b}_i + x^{(i,j)_b}_j < 1+ y^{(i,j)_b}_{(i,j)}$ & 0 & 0 & 0  \\
\hline
\multirow{2}{*}{$\bar{A}$} & $a$ & $y^{(i,j)_a}_{(i,j)} < x^{(i,j)_a}_i $,~ $y^{(i,j)_a}_{(i,j)} = x^{(i,j)_a}_j$ & 1 & 0 & 0  \\
& $b$ & $y^{(i,j)_b}_{(i,j)} = x^{(i,j)_b}_i $,~ $y^{(i,j)_b}_{(i,j)} < x^{(i,j)_b}_j $ & 0 & 0 & 1
\end{tabular}
\end{minipage}
\bigskip

For $(k,\ell) \in E\backslash \{(i,j)\}$ and $* \in \{a,b\}$, there are \emph{two} possible options for the coordinates $x^{(i,j)_*}_{k}$, $y^{(i,j)_*}_{(k,\ell)}$,
 and $x^{(i,j)_*}_{\ell}$ of $v^{(i,j)_*}$, allowed by the facets $W_{v^{(k,\ell)_a}}$ and $W_{v^{(k,\ell)_b}}$, which $v^{(i,j)_*}$ must be contained in.  If $(k,\ell) \in A$, then $v^{(i,j)_*}$ must satisfy \ref{facet0}$_{(k,\ell)}$ and \ref{facet3}$_{(k,\ell)}$ with equality, and if $(k,\ell) \in \bar{A}$, then $v^{(i,j)_*}$ must satisfy \ref{facet1}$_{(k,\ell)}$ and \ref{facet2}$_{(k,\ell)}$ with equality.  We summarize the possible values of  $x^{(i,j)_*}_{k}$, $y^{(i,j)_*}_{(k,\ell)}$, and $x^{(i,j)_*}_{\ell}$ in Table \ref{tab:kl_values}.

\bigskip
\begin{minipage}{\linewidth}
\centering
\captionof{table}{$x^{(i,j)_*}_k$, $y^{(i,j)_*}_{(k,\ell)}$, $x^{(i,j)_*}_{\ell}$, $(k,\ell) \neq (i,j)$, $* \in \{a,b\}$} \label{tab:kl_values}
\renewcommand{\arraystretch}{1.5}
\begin{tabular}{c|c|c|c|c}
\multirow{2}{*}{} &  & \multicolumn{3}{c}{\emph{possible values of}} \\
$(k,\ell)\in$ &
$v^{(i,j)_*}$  \emph{must satisfy}
 & $x^{(i,j)_*}_k$ & $y^{(i,j)_*}_{(k,\ell)}$ & $x^{(i,j)_*}_{\ell}$ \\
\hline
\multirow{2}{*}{$A$} &\multirow{2}{*}{$y^{(i,j)_*}_{(k,\ell)} = 0$,~ $x^{(i,j)_*}_k + x^{(i,j)_*}_{\ell} = 1+ y^{(i,j)_*}_{(k,\ell)} $} &  1 & 0 & 0 \\
 && 0 & 0 & 1 \\
\hline
\multirow{2}{*}{$\bar{A}$} & \multirow{2}{*}{$x^{(i,j)_*}_k  = y^{(i,j)_*}_{(k,\ell)} = x^{(i,j)_*}_{\ell} $}& 1 & 1 & 1 \\
  && 0 & 0 & 0 \\
\end{tabular}
\end{minipage}
\bigskip

Now we are ready to describe how we transform the matrix $M$ into an upper-triangular matrix $\tilde{M}$.  We use $A_r$ to denote row $r$ of matrix $A$.   We define
\[ \tilde{M} :=
\left[
\begin{array}{c}
M_1 \\
M_2 + M_1 \\
M_3 \pm M_2 \\
M_4 + M_3 \\
\vdots \\
M_{2m-1} \pm M_{2m-2} \\
M_{2m} + M_{2m-1}
\end{array}
\right]
=
\left[
\begin{array}{c}
(v^{(1,2)_a} - v^0)' \\
(v^{(1,2)_b} - v^0)' + (v^{(1,2)_a} - v^0)' \\
(v^{(2,3)_a} - v^0)' \pm (v^{(1,2)_b} - v^0)' \\
(v^{(2,3)_b} - v^0)' + (v^{(2,3)_a} - v^0)' \\
\vdots \\
(v^{(m,1)_a} - v^0)' \pm (v^{(m-1,m)_b} - v^0)' \\
(v^{(m,1)_b} - v^0)' + (v^{(m,1)_a} - v^0)' \\
\end{array}
\right],
\]
where the choice of $\tilde{M}_r := M_r + M_{r-1}$ versus $\tilde{M}_r:=M_r - M_{r-1}$, for $r \in \{3,5 \dots, 2m-1\}$, will be described below.  Already, we can see that $\det(\tilde{M}) = \det(M)$, and $M_{1,1} = x^{(1,2)}_1-x^{0}_1 = \pm \frac{1}{2}$.  In what follows, we completely specify the odd-indexed rows of $\tilde{M}$, and demonstrate that $\tilde{M}$ is upper-triangular with $|\tilde{M}_{r,r}| = 1$, for $r \in \{2,3,\dots,2m\}$.

$\tilde{M}$ has six different row types, which we consider in cases 1 through 6 below.  The calculation of $\tilde{M}_{r}$, for even $r = 2i  \in \{2,4,\dots,2m\}$, requires the two extreme points $v^{(i,j)_a}$ and $v^{(i,j)_b}$ of $W$, which arise by relaxing the two facets of $W$ associated with the same edge: $(i,j)$.  In case 1, $(i,j) \in A$, and in case 2,  $(i,j) \in \bar{A}$.

For odd index $r \in \{3, 5, \dots, 2m-1\}$, let $i = (r+1)/2$; the calculation of $\tilde{M}_r$ involves two vertices, $v^{(h,i)_b}$ and $v^{(i,j)_a}$, that arise by relaxing facets associated with adjacent edges $(h,i)$ and $(i,j)$ of $C_m$.  Cases 3, 4, 5, and 6 cover all possible combinations of inclusion/exclusion of $(h,i)$ and $(i,j)$ in the set A. We can now specify, for $r \in \{3, 5, \dots, 2m-1\}$,
\[
\tilde{M}_{r} :=
\left\{
\begin{array}{rl}
M_r - M_{r-1}, & \text{ if } (h,i) \in A; \\
M_r + M_{r-1}, & \text{ if } (h,i) \in \bar{A}. \\
\end{array}
\right.
\]

\bigskip
\noindent\textbf{Case 1:} $r = 2i \in \{2,4,\dots,2m\};~ (i,j) \in A.$

The columns of $\tilde{M}$ correspond to the coordinates of $v = (v_1, v_2, v_3, \dots, v_{2m}) = (x_1, y_{(1,2)}, x_2, \dots, y_{(m,1)})$, so that for $r=2i$, columns $r-1$, $r$, and $r+1$ correspond to variables $x_i$, $y_{(i,j)}$, and $x_j$, respectively.
Referring to Table \ref{tab:ij_values}, we have the following values for calculating coordinates $r-1$, $r$, and $r+1$ of $\tilde{M}_r = (v^{(i,j)_b} + v^{(i,j)_a} - 2v^0)'$:
\begin{center}
\begin{tabular}{l|c|c|c}
& $x^*_i$ & $y^*_{(i,j)}$ & $x^*_j$ \\
\hline
$v^0$ & 1/2 & 0 & 1/2 \\
\hline
$v^{(i,j)_a}$ & 1 & 1 & 1 \\
\hline
$v^{(i,j)_b}$ & 0 & 0 & 0 \\
\end{tabular}~;
\end{center}
so in this case we have
\[
\begin{array}{lcccr}
\tilde{M}_{r,r-1} &=& x^{(i,j)_b}_i + x^{(i,j)_a}_i - 2x^0_i &=& 0,\\
\tilde{M}_{r,r} &=& y^{(i,j)_b}_{(i,j)} + y^{(i,j)_a}_{(i,j)} - 2y^0_{(i,j)} &=& 1,\\
\tilde{M}_{r,r+1} &=& x^{(i,j)_b}_j + x^{(i,j)_a}_j - 2x^0_j &=& 0.
\end{array}
\]

Now we consider the remaining odd-index coordinates of $\tilde{M}_r$, which correspond to all remaining $x$-variables (besides $x_i$ and $x_j$).  Consider traversing the coordinates of $v^{(i,j)_a}$ and $v^{(i,j)_b}$ to the right (in the direction of higher indices), starting from the $x_j$ coordinate of both.  Referring to Table \ref{tab:kl_values}, if the next edge $(j,\hat{j})$ (adjacent to $(i,j)$) is in $A$, then
\[
(x^{(i,j)_a}_j, y^{(i,j)_a}_{(j,\hat{j})},x^{(i,j)_a}_{\hat{j}}) = (1,0,0)
\]
and
\[
(x^{(i,j)_b}_j, y^{(i,j)_b}_{(j,\hat{j})},x^{(i,j)_b}_{\hat{j}}) = (0,0,1).
\]
If $(j,\hat{j}) \in \bar{A}$, then
\[
(x^{(i,j)_a}_j, y^{(i,j)_a}_{(j,\hat{j})},x^{(i,j)_a}_{\hat{j}}) = (1,1,1)
\]
and
\[
(x^{(i,j)_b}_j, y^{(i,j)_b}_{(j,\hat{j})},x^{(i,j)_b}_{\hat{j}}) = (0,0,0).
\]
Notice that whether or not $(j,\hat{j})$ is in $A$, $x^{(i,j)_a}_{\hat{j}}$ and $x^{(i,j)_b}_{\hat{j}}$ have opposite values, as was initiated at $x_j$.
In fact, due to the possible coordinates of each triple $(x^{(i,j)_*}_k, y^{(i,j)_*}_{(k,\ell)},x^{(i,j)_*}_{\ell})$ (see Table \ref{tab:kl_values}), as we traverse the coordinates of $v^{(i,j)_a}$ and $v^{(i,j)_b}$ from left to right starting at $x_j$, including wrapping around to $x_1$ and continuing to $x_i$ (because this is a cycle), we see that, for all $k \in V$, $x^{(i,j)_a}_{k}$ and  $x^{(i,j)_b}_{k}$ maintain an opposite pattern throughout the vector, so that $x^{(i,j)_a}_{k} + x^{(i,j)_b}_{k} = 1$, and therefore $\tilde{M}_{r, 2k-1} = x^{(i,j)_a}_{k} + x^{(i,j)_b}_{k} - 2x^0_{k} = 0$.

We complete the computation of $\tilde{M}_{r}$ by considering the rest of its even-index coordinates, which correspond to $y$-variables. Again, we refer to Table \ref{tab:kl_values}.  Let $(k,\ell) \in E\backslash \{(i,j)\}$.  If $(k,\ell) \in A$, then $y^{(i,j)_a}_{(k,\ell)} = y^{(i,j)_b}_{(k,\ell)} = y^0_{(k,\ell)} = 0$.  If $(k,\ell) \in \bar{A}$, $y^{(i,j)_a}_{(k,\ell)}$ and  $y^{(i,j)_b}_{(k,\ell)}$ have opposite values (as described above for $x^{(i,j)_a}_{\ell}$ and  $x^{(i,j)_b}_{\ell}$ ), and $y^0_{(k,\ell)} = 1/2$.  In either case, $\tilde{M}_{r,2k} = y^{(i,j)_a}_{(k,\ell)} + y^{(i,j)_b}_{(k,\ell)} - 2y^0_{(k,\ell)} = 0$.

In summary,
\[
\tilde{M}_{r,k} =
\left\{
\begin{array}{rl}
1, & \text{ if } k = r, \\
0, & \text{ otherwise}.
\end{array}
\right.
\]

\smallskip
\noindent\textbf{Case 2:} $r = 2i \in \{2,4,\dots,2m\};~ (i,j) \in \bar{A}.$

Again we have $\tilde{M} = (v^{(i,j)_b} + v^{(i,j)_a} - 2v^0)'$.   According to Table \ref{tab:ij_values}, we have for indices $r-1$, $r$, and $r+1$,
\begin{center}
\begin{tabular}{l|c|c|c}
& $x^*_i$ & $y^*_{(i,j)}$ & $x^*_j$ \\
\hline
$v^0$ & 1/2 & 1/2 & 1/2 \\
\hline
$v^{(i,j)_a}$ & 1 & 0 & 0 \\
\hline
$v^{(i,j)_b}$ & 0 & 0 & 1 \\
\end{tabular}~,
\end{center}
so that
\[
\begin{array}{lcccr}
\tilde{M}_{r,r-1} &=& x^{(i,j)_b}_i + x^{(i,j)_a}_i - 2x^0_i  &=& 0,\\
\tilde{M}_{r,r} &=&  y^{(i,j)_b}_{(i,j)} + y^{(i,j)_a}_{(i,j)} - 2y^0_{(i,j)}  &=& -1,\\
\tilde{M}_{r,r+1} &=& x^{(i,j)_b}_j + x^{(i,j)_a}_j - 2x^0_j  &=& 0.
\end{array}
\]
As in Case 1 (and by a similar argument), the remainder of the coordinates of $\tilde{M}_r$ are all zero.  In summary,
\[
\tilde{M}_{r,k} =
\left\{
\begin{array}{rl}
-1 & \text{ if } k = r; \\
0 & \text{ otherwise}.
\end{array}
\right.
\]

\smallskip
\noindent\textbf{Case 3:} $r = 2i-1 \in \{3,5,\dots,2m-1\}; ~ (h,i),~(i,j) \in A.$

In this case, we take $\tilde{M}_r := M_r - M_{r-1} = (v^{(i,j)_a} - v^{(h,i)_b})'$.  Extreme points $v^{(h,i)_b}$ and $v^{(i,j)_a}$ arise by relaxing facets associated with two adjacent edges, $(h,i)$ and $(i,j)$, respectively.  In all, five variables appear in these two facets:  $x_h$, $y_{(h,i)}$, $x_i$, $y_{(i,j)}$, and $x_j$, corresponding to column indices $r-2$, $r-1$, $r$, $r+1$, and $r+2$, respectively.  Referring to both Tables \ref{tab:ij_values} and \ref{tab:kl_values} as appropriate, we obtain the values,

\begin{center}
\begin{tabular}{l|c|c|c|c|c}
 & $x^*_{h}$ & $y^*_{(h,i)}$ & $x^*_i$ & $y^*_{(i,j)}$ & $x^*_j$ \\
\hline
$v^{(h,i)_b}$ & 0 & 0 & 0 & 0 & 1 \\
\hline
$v^{(i,j)_a}$ & 0 & 0 & 1 & 1 & 1 \\
\end{tabular}~,
\end{center}
and we can now calculate,
\[
\begin{array}{lcccr}
\tilde{M}_{r,r-1} &=&  y^{(i,j)_a}_{(h,i)} - y^{(h,i)_b}_{(h,i)}  &=& 0,\\
\tilde{M}_{r,r} &=& x^{(i,j)_a}_i - x^{(h,i)_b}_i &=& 1,\\
\tilde{M}_{r,r+1} &=& y^{(i,j)_a}_{(i,j)} - y^{(h,i)_b}_{(i,j)}  &=& 1.
\end{array}
\]

Unlike in cases 1 and 2, the $x$-coordinates in the last column of the table above match in $v^{(h,i)_b}$ and $v^{(i,j)_a}$; i.e.,  $x^{(h,i)_b}_k = x^{(i,j)_a}_k$, for $k \in \{h, j\}$.  As described in case 1, the $x$-coordinates maintain the same (matching, in this case) pattern throughout (except at $x_i$).  So for all $k \in V\backslash \{i\}$, $x^{(h,i)_b}_k = x^{(i,j)_a}_k$.  Therefore, $x^{(i,j)_a}_k - x^{(h,i)_b}_k = 0$, for $k \in V\backslash \{i\}$.

For the $y$-coordinates of $\tilde{M}_r$ (besides those in the table above), we refer to Table \ref{tab:kl_values}.  Noting that $v^{(h,i)_b}$ and $v^{(i,j)_a}$ match on all $x$-coordinates, they must also match at the rest of the $y$-coordinates.  For example, if $(k,\ell) \in A\backslash \{(h,i),(i,j)\}$, and $x^{(h,i)_b}_k = x^{(i,j)_a}_k = 1$, then by Table \ref{tab:kl_values}, $y^{(h,i)_b}_{(k,\ell)} = y^{(i,j)_a}_{(k,\ell)} = 0$ (and $x^{(h,i)_b}_{\ell} = x^{(i,j)_a}_{\ell} = 0$).  That is, there is only one choice for the threesome $(x^*_k,~ y^*_{(k,\ell)},~ x^*_{\ell})$  in Table \ref{tab:kl_values}, once the $(k,\ell)$ is fixed with respect to belonging to $A$ or $\bar{A}$, and $x^*_k$ is fixed to either 0 or 1, for $* \in \{(h,i)_b,(i,j)_a\}$.  We can conclude that $y^{(i,j)_a}_{(k,\ell)} - y^{(h,i)_b}_{(k,\ell)} = 0$, for $(k,\ell) \in E\backslash \{(h,i),(i,j)\}$.

In summary,
\[
\tilde{M}_{r,k} =
\left\{
\begin{array}{rl}
1, & \text{ if } k \in \{r, r+1\}, \\
0, & \text{ otherwise}.
\end{array}
\right.
\]

\smallskip
\noindent\textbf{Case 4:} $r = 2i-1 \in \{3,5,\dots,2m-1\}; ~ (h,i),~(i,j) \in \bar{A}.$

In this case, we take $\tilde{M}_r = M_r + M_{r-1} = (v^{(i,j)_a} + v^{(h,i)_b} - 2v^0)'$.  As in case 3, the two involved extreme points arise by relaxing facets corresponding to two adjacent edges, and so involve five adjacent variables, with $x_i$ corresponding to column index $r$.  We obtain the following table of values:
\begin{center}
\begin{tabular}{l|c|c|c|c|c}
 & $x^*_{h}$ & $y^*_{(h,i)}$ & $x^*_i$ & $y^*_{(i,j)}$ & $x^*_j$ \\
\hline
$v^0$ & 1/2 & 1/2 & 1/2 & 1/2 & 1/2 \\
\hline
$v^{(h,i)_b}$ & 0 & 0 & 1 & 1 & 1 \\
\hline
$v^{(i,j)_a}$ & 1 & 1 & 1 & 0 & 0 \\
\end{tabular}~,
\end{center}
leading to
\[
\begin{array}{lcccr}
\tilde{M}_{r,r-1} &=&  y^{(i,j)_a}_{(h,i)} + y^{(h,i)_b}_{(h,i)} - 2y^0_{(h,i)}  &=& 0,\\
\tilde{M}_{r,r} &=& x^{(i,j)_a}_i + x^{(h,i)_b}_i - 2x^0_{i} &=& 1,\\
\tilde{M}_{r,r+1} &=& y^{(i,j)_a}_{(i,j)} + y^{(h,i)_b}_{(i,j)} - 2y^0_{(i,j)}   &=& 0.
\end{array}
\]
Following similar arguments as those in the previous cases, all other coordinates of $M_r$ are zero, so that
\[
\tilde{M}_{r,k} =
\left\{
\begin{array}{rl}
1, & \text{ if } k = r, \\
0, & \text{ otherwise}.
\end{array}
\right.
\]

\smallskip
\noindent\textbf{Case 5:} $r = 2i-1 \in \{3,5,\dots,2m-1\}; ~ (h,i) \in A, ~(i,j) \in \bar{A}.$

As in case 3, we take $\tilde{M}_r := M_r - M_{r-1} = (v^{(i,j)_a} - v^{(h,i)_b})'$, and again the $x$-coordinates in the outside columns ($x_h$ and $x_j$) match:
\begin{center}
\begin{tabular}{l|c|c|c|c|c}
 & $x^*_{h}$ & $y^*_{(h,i)}$ & $x^*_i$ & $y^*_{(i,j)}$ & $x^*_j$ \\
\hline
$v^{(h,i)_b}$ & 0 & 0 & 0 & 0 & 0 \\
\hline
$v^{(i,j)_a}$ & 0 & 0 & 1 & 0 & 0 \\
\end{tabular}~,
\end{center}
so $v^{(h,i)_b}$ and $v^{(i,j)_a}$ agree in all coordinates except in $x_i$ (index $r$):
\[
\tilde{M}_{r,k} =
\left\{
\begin{array}{rl}
1, & \text{ if } k = r, \\
0, & \text{ otherwise}.
\end{array}
\right.
\]

\smallskip
\noindent\textbf{Case 6:}  $r = 2i-1 \in \{3,5,\dots,2m-1\}; ~ (h,i) \in \bar{A},
~(i,j) \in A.$

In this last case, we set $\tilde{M}_r = M_r + M_{r-1} = (v^{(i,j)_a} + v^{(h,i)_b} - 2v^0)'$, and we have
\begin{center}
\begin{tabular}{l|c|c|c|c|c}
 & $x^*_{h}$ & $y^*_{(h,i)}$ & $x^*_i$ & $y^*_{(i,j)}$ & $x^*_j$ \\
\hline
$v^0$ & 1/2 & 1/2 & 1/2 & 0 & 1/2 \\
\hline
$v^{(h,i)_b}$ & 0 & 0 & 1 & 0 & 0 \\
\hline
$v^{(i,j)_a}$ & 1 & 1 & 1 & 1 & 1 \\
\end{tabular}~.
\end{center}
We omit the details, which are similar to previous cases, but the result is
\[
\tilde{M}_{r,k} =
\left\{
\begin{array}{rl}
1, & \text{ if } k \in \{r, r+1\}, \\
0, & \text{ otherwise}.
\end{array}
\right.
\]

\bigskip
This concludes our proof that $\tilde{M}$ is upper triangular with diagonal coordinates
\[
|\tilde{M}_{r,r}| =
\left\{
\begin{array}{rl}
1/2, & \text{ if } r = 1, \\
1, & \text{ otherwise},
\end{array}
\right.
\]
and the result follows.
\end{proof}

Combining Theorem \ref{vol_QCm} with Lemmas \ref{lem:disj} and \ref{cutoffhalf}, we obtain the following result --- a closed-form expression for the volume of the boolean quadric polytope of a cycle.

\begin{thm} \label{volPCm}
For $m \geq 3$,
\[
\vol(\scrP(C_m)) ~=~ \vol(\scrQ(C_m))-\frac{2^{m-1}}{2(2m)!} ~=~ \frac{m A_{2m-1}}{2^m (2m)!}-\frac{2^{m-2}}{(2m)!}.
\]
\end{thm}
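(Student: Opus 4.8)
The plan is to obtain $\vol(\scrP(C_m))$ by subtracting from $\vol(\scrQ(C_m))$ the total volume cut away when we pass from the simple relaxation to the boolean quadric polytope. By Theorem~\ref{thm:oddpad}, for $G=C_m$ the polytope $\scrP(C_m)$ is exactly the subset of $\scrQ(C_m)$ satisfying all the odd-cycle inequalities $OC(A)$ as $A$ ranges over the odd-cardinality subsets of $E(C_m)$, together with the inequalities \ref{facet0}--\ref{facet3} that already define $\scrQ(C_m)$. Hence $\scrQ(C_m)\setminus\scrP(C_m)$ is precisely the union, over all such $A$, of the regions of $\scrQ(C_m)$ violating $OC(A)$. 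So the first step is to write
\[
\vol(\scrP(C_m)) ~=~ \vol(\scrQ(C_m)) ~-~ \vol\!\left(\bigcup_A \{(x,y)\in\scrQ(C_m): OC(A)\text{ violated}\}\right),
\]
where the union is over the odd-cardinality $A\subseteq E(C_m)$.

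The second step is to evaluate that union's volume using inclusion--exclusion, which collapses dramatically because of Lemma~\ref{lem:disj}: any two distinct odd-cycle inequalities cut off disjoint portions of $\scrQ(C_m)$, so all the intersection terms vanish and the volume of the union is just the sum of the individual cut-off volumes. The third step is to count the number of odd-cardinality subsets of the $m$-element set $E(C_m)$; this is the standard fact $\sum_{k\text{ odd}}\binom{m}{k}=2^{m-1}$. The fourth step is to plug in Lemma~\ref{cutoffhalf}, which gives that each such cut-off region has volume $\frac{1}{2(2m)!}$, so the total volume removed is $2^{m-1}\cdot\frac{1}{2(2m)!}=\frac{2^{m-2}}{(2m)!}=\frac{2^{m-1}}{2(2m)!}$. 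Finally, we substitute the closed form $\vol(\scrQ(C_m))=\frac{mA_{2m-1}}{2^m(2m)!}$ from Theorem~\ref{vol_QCm} to obtain
\[
\vol(\scrP(C_m)) ~=~ \frac{mA_{2m-1}}{2^m(2m)!} - \frac{2^{m-1}}{2(2m)!} ~=~ \frac{mA_{2m-1}}{2^m(2m)!} - \frac{2^{m-2}}{(2m)!},
\]
which is the claimed identity.

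There is essentially no hard analytic work left here, since all three ingredients (the facet description, the disjointness lemma, and the per-region volume) have already been established; the statement is genuinely a corollary. The one point that deserves a careful sentence rather than a wave of the hand is the justification that the removed set is exactly the disjoint union described — i.e., that no odd-cycle inequality is redundant given the others and the \ref{facet0}--\ref{facet3} inequalities, and that each $OC(A)$ really does remove a nonempty region (its vertex $\bfv[0]$). Both follow from Theorem~\ref{thm:oddpad}, which asserts that \ref{facet0}--\ref{oddcyclefacet} all describe \emph{unique facets} of $\scrP(C_m)$, so none of the $OC(A)$ is implied by the rest; combined with Lemma~\ref{cutoffsimplex} (each cut-off region is a genuine $2m$-simplex of positive volume) and Lemma~\ref{lem:disj} (pairwise disjointness), the additivity of volume over the union is immediate. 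So the only ``obstacle'' is bookkeeping: making sure the count $2^{m-1}$ of odd subsets and the factor $\tfrac{1}{2(2m)!}$ are combined correctly and reconciled with the two equivalent ways of writing the answer.
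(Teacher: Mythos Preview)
Your proposal is correct and follows essentially the same approach as the paper, which simply states that the result is obtained by combining Theorem~\ref{vol_QCm} with Lemmas~\ref{lem:disj} and~\ref{cutoffhalf}. Your write-up is in fact more detailed than the paper's one-line justification, spelling out explicitly the count $2^{m-1}$ of odd subsets and the non-redundancy of each $OC(A)$ via Theorem~\ref{thm:oddpad}.
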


\section{Asymptotics} \label{sec:asymp}

In this section, we make some asymptotic analyses. In doing so, we hope to
learn something about the behavior of the polytopes that we are studying, as problem sizes grows.
This type of analysis follows the spirit of many of the results in \cite{LM1994}.
Furthermore, it is not simple to compare the combinatorial formulae that we have derived,
and so asymptotics provides a clear lens for comparing.
After invoking Stirling's formula or  Andr\'e's Theorem as appropriate,
our asymptotic results can be checked with \verb;Mathematica;.

We demonstrate for a wide variety of graphs,
in particular: complete graphs (Proposition \ref{prop:asymp_QKn}), stars and paths
(Corollary \ref{cor:asympSP}),
and cycles (first half of Corollary \ref{cor:asympCm}),
that $\scrQ$ is a large portion of the unit hypercube.
This is important for putting into context all
current and future results on volumes of
boolean quadric polytopes and their relaxations.

A highlight of this section is a demonstration that the volume of $\scrQ$
outside of $\scrP$ is quite small, compared to  the volume of $\scrQ$,
when $G$ is a single long cycle  (Corollary \ref{cor:asympCm}); this is despite the
fact that when $G$ is a single long cycle,
the description of $\scrP$ is much heavier than that of $\scrQ$.
In sharp contrast to this, we demonstrate that when $G$ is a collection of
many small cycles (in fact, triangles),
then the  volume of $\scrQ$
outside of $\scrP$ is quite large, compared to  that of $\scrQ$
(Corollary \ref{cor:Q3}), while the
description of $\scrP$ is \emph{not} much heavier than that of $\scrQ$.
In particular, we see that for graphs having each edge in exactly one
cycle, we can get very different behaviors. Graphs with each edge in at most one
cycle are
cactus forests. So, as discussed more in \S\ref{sec:conclusions},
it becomes interesting to try and get an efficient algorithm for calculating
the volume of $\scrP$ for a cactus forest; note that non-trivial cactus forests
have treewidth 2, so we already have an  efficient algorithm for calculating
the volume of $\scrQ$.

As indicated in \cite{LM1994}, it is natural to compare
sets in $\mathbb{R}^d$ by comparing the $d$-th roots of their
volumes. Because we take $d$-th roots, we have to be precise about the
ambient dimension $d$ for our polytopes.
So in what follows we assume that our graphs have no isolated vertices, and we always regard our polytopes
as being in dimension $d=n+m$, the least dimension that
makes sense (rather than in dimension $n+\binom{n}{2}$).

As we have mentioned at the outset,  \cite{KLS1997}
 established the $d$-dimensional volume of $\scrQ(K_n)$ to be $2^{2n-d}n!/(2n)!$,
 where $d=n+\binom{n}{2}$.   Invoking Stirling's formula, it is easy to check
 the following calculation.

 \begin{prop} \label{prop:asymp_QKn}
\begin{equation} \label{asymp_QKn}
\lim_{n \to \infty} \vol_{d}(\scrQ(K_n))^{1/d} ~=~ \frac{1}{2},
\end{equation}
where $d=n+\binom{n}{2}$.
\end{prop}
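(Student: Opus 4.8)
The plan is to reduce the limit to a straightforward application of Stirling's formula to the closed-form $\vol_d(\scrQ(K_n)) = 2^{2n-d}\,n!/(2n)!$ from \cite{KLS1997}, where $d = n + \binom{n}{2} = \binom{n+1}{2}$. First I would observe that $d \sim n^2/2$ as $n \to \infty$, so that $\log \vol_d(\scrQ(K_n))$ needs to be estimated only up to $o(n^2)$ terms after dividing by $d$. Writing $L_n := \log \vol_d(\scrQ(K_n)) = (2n - d)\log 2 + \log(n!) - \log((2n)!)$, the term $(2n)\log 2$ is $O(n) = o(d)$, the term $\log(n!)$ is $O(n\log n) = o(d)$, and $\log((2n)!)$ is likewise $O(n\log n) = o(d)$; the only term that contributes at order $d$ is $-d\log 2$. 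Hence $L_n/d \to -\log 2$, and therefore $\vol_d(\scrQ(K_n))^{1/d} = e^{L_n/d} \to e^{-\log 2} = \tfrac12$.

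The key steps, in order, are: (i) substitute the \cite{KLS1997} formula and take logarithms; (ii) record the exact asymptotics $d = \binom{n+1}{2} = \tfrac{n^2}{2} + \tfrac{n}{2}$, so $1/d = \tfrac{2}{n^2}(1 + o(1))$; (iii) apply Stirling, $\log(k!) = k\log k - k + O(\log k)$, to both $\log(n!)$ and $\log((2n)!)$, and note both are $O(n\log n)$; (iv) divide $L_n$ by $d$, observe that every term except $-d\log 2$ vanishes in the limit because it is $O(n\log n)/\Theta(n^2) \to 0$; (v) exponentiate. One could equivalently phrase (iv)–(v) by noting $\vol_d(\scrQ(K_n))^{1/d} = 2^{(2n-d)/d}\bigl(n!/(2n)!\bigr)^{1/d}$, where $2^{(2n-d)/d} = 2^{2n/d - 1} \to 2^{-1}$ since $2n/d \to 0$, and $\bigl(n!/(2n)!\bigr)^{1/d} \to 1$ since $\log\bigl(n!/(2n)!\bigr) = O(n\log n) = o(d)$.

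There is no real obstacle here; the computation is routine once one notes that $d$ grows quadratically while the $n!$, $(2n)!$, and $2^{2n}$ contributions to the exponent grow at most like $n\log n$, so only the $2^{-d}$ factor survives the $d$-th root. The one point worth stating carefully is that the factorial ratio $n!/(2n)!$ is a \emph{tiny} number (it decays super-exponentially in $n$), yet its $d$-th root still tends to $1$ because the ambient dimension $d$ is itself of order $n^2$, which dwarfs the $\Theta(n\log n)$ size of $\bigl|\log(n!/(2n)!)\bigr|$; this is exactly the phenomenon that makes the $d$-th-root normalization the natural one for these high-dimensional polytopes, as emphasized in \cite{LM1994}. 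After substituting Stirling's formula, the remaining identity can be verified with \verb;Mathematica; as the paper indicates.
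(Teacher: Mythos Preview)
Your proposal is correct and follows exactly the approach the paper indicates: substitute the closed-form $\vol_d(\scrQ(K_n)) = 2^{2n-d}n!/(2n)!$ from \cite{KLS1997} and invoke Stirling's formula, noting that $d = \binom{n+1}{2}$ grows quadratically while all other logarithmic contributions are $O(n\log n)$, so only the $2^{-d}$ factor survives the $d$-th root. The paper itself gives no further detail beyond ``Invoking Stirling's formula, it is easy to check,'' so your write-up is in fact more explicit than the original.
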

This is quite substantial, as the volume of the entire unit hypercube and its $d$-th root are
of course unity. It is an outstanding open problem, first posed in  \cite{KLS1997} and which we would like to highlight,
to understand how close $\vol_{d}(\scrQ(K_n))$ and $\vol_{d}(\scrP(K_n))$
are, asymptotically.

When $G$ is a forest, $\scrP$ and $\scrQ$ are the same.  Still it is interesting to compare the asymptotics of $\scrQ(G)$ and $\scrQ(G')$ when connected $G$ and $G'$ have the same number of edges.
Our next result does this for two very different trees on $m$ edges.

\begin{cor} \label{cor:asympSP}
\begin{equation} \label{asymp_QSm}
\lim_{m \to \infty} \vol_{2m+1}(\scrQ(S_m))^{1/(2m+1)} ~=~ \frac{1}{2}
\end{equation}

 and

 \begin{equation} \label{asymp_QPm}
\lim_{m \to \infty} \vol_{2m+1}(\scrQ(P_m))^{1/(2m+1)} ~=~ \frac{\sqrt{2}}{\pi}
~\approx~ 0.450158.
 \end{equation}
\end{cor}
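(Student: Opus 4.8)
The plan is to read both limits directly off the closed-form expressions in Theorems~\ref{thm:starvol} and~\ref{thm:pathvol}: take logarithms, divide by $2m+1$, and let $m\to\infty$. Only the leading exponential term survives the division, so the only substantive inputs are Stirling's formula for the star and the exponential growth rate of the odd Euler numbers $A_{2m+1}$ for the path.

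For the star, Theorem~\ref{thm:starvol} gives $\vol_{2m+1}(\scrQ(S_m)) = (m!)^2/(2m+1)! = \bigl((2m+1)\binom{2m}{m}\bigr)^{-1}$. Using the standard consequence of Stirling's formula, $\binom{2m}{m}\sim 4^m/\sqrt{\pi m}$, we obtain $\vol_{2m+1}(\scrQ(S_m))\sim \sqrt{\pi m}\,\bigl/\bigl((2m+1)4^m\bigr)$. Taking logarithms and dividing by $2m+1$, the $\Theta(\log m)$ contributions of $\sqrt{\pi m}$ and of $2m+1$ vanish in the limit, leaving $\lim_m \tfrac{-m\log 4}{2m+1} = -\log 2$; exponentiating gives \eqref{asymp_QSm}.

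For the path, Theorem~\ref{thm:pathvol} gives $\vol_{2m+1}(\scrQ(P_m)) = A_{2m+1}/\bigl(2^m(2m+1)!\bigr)$, so
\[
\vol_{2m+1}(\scrQ(P_m))^{1/(2m+1)} ~=~ 2^{-m/(2m+1)}\left(\frac{A_{2m+1}}{(2m+1)!}\right)^{1/(2m+1)}.
\]
The first factor tends to $1/\sqrt2$. For the second, Andr\'e's Theorem (Theorem~\ref{thm:andre}) says $\tan(x) = \sum_{m\ge0}\frac{A_{2m+1}}{(2m+1)!}x^{2m+1}$, and $\tan$ is meromorphic with simple poles at $\pm\pi/2,\pm 3\pi/2,\dots$ (residue $-1$ at each). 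Subtracting the principal parts at the two dominant poles $\pm\pi/2$ yields
\[
\tan(x) ~=~ \frac{2x}{\pi^2/4 - x^2} ~+~ h(x),
\]
with $h$ analytic on $|x|<3\pi/2$. Extracting coefficients from the rational term gives $\frac{A_{2m+1}}{(2m+1)!} = \frac{2^{2m+3}}{\pi^{2m+2}}\bigl(1+o(1)\bigr)$, whence $\bigl(A_{2m+1}/(2m+1)!\bigr)^{1/(2m+1)}\to 2/\pi$ (since $2^{(2m+3)/(2m+1)}\to2$ and $\pi^{(2m+2)/(2m+1)}\to\pi$). One may instead simply cite the classical asymptotic $A_{2m+1}\sim 2^{2m+3}(2m+1)!/\pi^{2m+2}$, or, even more weakly, note that the radius of convergence $\pi/2$ of $\tan$ together with positivity of the Taylor coefficients and Pringsheim's theorem already pins down this growth rate. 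Multiplying the two factors gives $\frac{1}{\sqrt2}\cdot\frac{2}{\pi} = \frac{\sqrt2}{\pi}$, which is \eqref{asymp_QPm}.

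The only step needing genuine care is the asymptotics of the tangent numbers in the path case; everything else is routine Stirling bookkeeping, readily confirmed with a computer algebra system as the surrounding text notes. The partial-fraction reduction is painless here precisely because $\tan$ has exactly the two symmetric simple poles $\pm\pi/2$ on the circle of convergence, so their combined principal part is a single elementary rational function whose coefficients are immediate.
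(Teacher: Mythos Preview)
Your proof is correct and follows essentially the same approach as the paper: invoke Theorems~\ref{thm:starvol} and~\ref{thm:pathvol}, apply Stirling's formula for the star, and use the asymptotic growth of the tangent numbers (via Andr\'e's Theorem) for the path. The only difference is that you derive the asymptotic $A_{2m+1}/(2m+1)!\sim 2^{2m+3}/\pi^{2m+2}$ explicitly from the pole structure of $\tan$, whereas the paper simply quotes the equivalent formula $A_k/k! = \frac{4}{\pi}(2/\pi)^k + \mathcal{O}((2/3\pi)^k)$ from Stanley's survey.
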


\begin{proof}
It is easy to check \eqref{asymp_QSm} using Stirling's formula.

By Theorem \ref{thm:pathvol}, we have that
$\vol_{2m+1}(\scrQ(P_m))  = \frac{A_{2m+1}}{2^m(2m+1)!}$.
By Andr\'e's Theorem \ref{thm:andre}, we have
\[
A_k /k! = \frac{4}{\pi} \left(\frac{2}{\pi}\right)^k +
\mathcal{O}\left(
\left(\frac{2}{3\pi}\right)^k
\right)
\]
(see \cite{Stanley_Alt_Survey}).  Combining these facts,  \eqref{asymp_QPm} follows easily.
\end{proof}

It is interesting to observe that the path and star, both on $m$ edges and $m+1$ vertices,
and hence having their associated polytopes naturally living in dimension $2m+1$,
behave substantially similarly though non-trivially differently, from our point of view.

Next, we demonstrate that for $C_m$,
the volume of $\scrQ$ is quite large
compared to the volume of $\scrQ$ that is
outside of $\scrP$.
So, we have a family of examples
demonstrating that $\scrP$ can have a description
that has many more inequalities than $\scrQ$,
while their volumes are very close.  In particular,
when $G$ is a cycle, $\scrQ$ has only $4m$ facets, while
$\scrP$ has $4m + 2^{m-1}$ facets.

\begin{cor} \label{cor:asympCm}

\begin{equation} \label{asymp_QCm}
\lim_{m \to \infty} \vol_{2m}(\scrQ(C_m))^{1/2m} ~=~ \frac{\sqrt{2}}{\pi} ~\approx~ 0.450158
\end{equation}

 and

 \begin{equation} \label{asymp_QCm_diff}
 \lim_{m \to \infty}
 m \times \vol_{2m}\left(\scrQ(C_m) \setminus \scrP(C_m)\right)^{1/2m} ~=~
 \frac{e}{\sqrt{2}}.
 \end{equation}
\end{cor}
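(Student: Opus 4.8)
The plan is to derive both limits from the closed-form expressions already in hand, namely $\vol_{2m}(\scrQ(C_m)) = mA_{2m-1}/(2^m(2m)!)$ from Theorem \ref{vol_QCm} and $\vol_{2m}(\scrQ(C_m)\setminus\scrP(C_m)) = 2^{m-1}/(2(2m)!) = 2^{m-2}/(2m)!$ from Lemma \ref{cutoffhalf} together with Lemma \ref{lem:disj} (which guarantees the $2^{m-1}$ cut-off simplices are disjoint, so their volumes simply add). The first limit \eqref{asymp_QCm} should come from the same Andr\'e-type asymptotic used in Corollary \ref{cor:asympSP}: writing $A_{2m-1}/(2m-1)! = \tfrac{4}{\pi}(2/\pi)^{2m-1} + \mathcal{O}((2/(3\pi))^{2m-1})$, I would substitute to get $\vol_{2m}(\scrQ(C_m)) = \tfrac{m}{2^m(2m)(2m-1)}\cdot\tfrac{4}{\pi}(2/\pi)^{2m-1}(1+o(1))$. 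Taking $2m$-th roots, the polynomial prefactors $m/(2^m(2m)(2m-1))$ contribute a factor tending to $1$ on the part that is polynomial in $m$ but the $2^{-m}$ contributes $2^{-1/2}$; meanwhile $((2/\pi)^{2m-1})^{1/2m} \to 2/\pi$. Multiplying, $\tfrac{1}{\sqrt{2}}\cdot\tfrac{2}{\pi} = \tfrac{\sqrt 2}{\pi}$, which matches, and I would remark this is no surprise given the Corollary after Theorem \ref{vol_QCm} relating $\vol(\scrQ(C_m))$ to $\vol(\scrQ(P_{m-1}))/4$, whose root was computed in \eqref{asymp_QPm}.

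For the second limit \eqref{asymp_QCm_diff}, the quantity is $m\cdot\bigl(2^{m-2}/(2m)!\bigr)^{1/2m}$. The $m$ out front is the key structural difference from the other parts: here we multiply by $m$ rather than taking a pure $2m$-th root, precisely because the quantity inside decays super-exponentially (like a reciprocal factorial) so its $2m$-th root tends to $0$, and the $m$ rescaling is what produces a finite nonzero limit. I would apply Stirling's formula, $(2m)! \sim \sqrt{4\pi m}\,(2m/e)^{2m}$, so that $\bigl(2^{m-2}/(2m)!\bigr)^{1/2m} \sim 2^{(m-2)/2m}\cdot(2m/e)^{-1}\cdot(4\pi m)^{-1/4m}$. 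The factor $2^{(m-2)/2m} \to 2^{1/2}=\sqrt 2$, the factor $(4\pi m)^{-1/4m}\to 1$, and $(2m/e)^{-1} = e/(2m)$. Hence the whole expression is asymptotic to $m\cdot\sqrt 2\cdot e/(2m) = e/\sqrt 2$, as claimed.

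The main obstacle is not conceptual but bookkeeping: I must be careful that every polynomial-in-$m$ factor (the $m$ in the numerator of $\vol(\scrQ(C_m))$, the $(2m)(2m-1)$ from peeling $(2m)!$ down to $(2m-1)!$, the Stirling prefactor $\sqrt{4\pi m}$) raised to the $1/2m$ power genuinely tends to $1$, and that the error term $\mathcal{O}((2/(3\pi))^{2m-1})$ in Andr\'e's asymptotic is negligible after taking roots — both standard since $x^{1/m}\to 1$ for fixed-degree polynomial growth and since $2/(3\pi) < 2/\pi$. A secondary subtlety worth stating explicitly is that $\vol(\scrQ(C_m)\setminus\scrP(C_m))$ equals the sum of the $2^{m-1}$ disjoint simplex volumes: there are $2^{m-1}$ odd-cardinality subsets of the $m$-edge set $E(C_m)$, Lemma \ref{lem:disj} makes the removed regions pairwise disjoint, and Lemma \ref{cutoffhalf} gives each volume $1/(2(2m)!)$, so the total is $2^{m-1}/(2(2m)!)$, consistent with Theorem \ref{volPCm}. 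With those points pinned down, both displayed limits follow by the elementary manipulations above; the computation can, as the authors note, be double-checked in \verb;Mathematica;.
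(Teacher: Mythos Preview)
Your proposal is correct and follows essentially the same approach as the paper: invoke Theorem~\ref{vol_QCm} together with the Andr\'e asymptotic $A_k/k!\sim\tfrac{4}{\pi}(2/\pi)^k$ for \eqref{asymp_QCm}, and use Theorem~\ref{volPCm} (equivalently Lemmas~\ref{lem:disj} and~\ref{cutoffhalf}) plus Stirling for \eqref{asymp_QCm_diff}. One harmless slip: $(2m)!/(2m-1)!=2m$, not $(2m)(2m-1)$, so your prefactor should be $m/(2^m\cdot 2m)=1/2^{m+1}$ rather than $m/(2^m(2m)(2m-1))$; as you yourself note, any polynomial factor raised to the $1/(2m)$ power tends to $1$, so the conclusion is unaffected.
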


\begin{proof}
By Theorem \ref{vol_QCm}, we have that $\vol_{2m}(\scrQ(C_m)) = \frac{A_{2m-1}}{2^{m+1}(2m-1)!}$.  Using
again Andr\'e's Theorem and Stirling's formula (as in the proof of  Corollary \ref{cor:asympSP}),  \eqref{asymp_QCm} follows easily.

Because $\scrP(C_m) \subseteq \scrQ(C_m)$,
Theorem \ref{volPCm}
implies that
\[
\vol_{2m}\left(\scrQ(C_m) \setminus \scrP(C_m)\right) = 2^{m-2}/(2m)!.
\]
  Invoking  Stirling's formula, we have
\[
\vol_{2m}(\scrQ(C_m) \setminus \scrP(C_m))^{1/2m} \sim \left(\frac{2^{m-2}}{\sqrt{4m \pi}\left(\frac{2m}{e}\right)^{2m}}\right)^{1/2m} \sim \frac{e}{m\sqrt{2}},
\]
and \eqref{asymp_QCm_diff} follows.
\end{proof}

We note that because $P_m$ is a forest, $\scrP(P_m)=\scrQ(P_m)$.
While of course $C_m$ is not a forest, and so  $\scrP(C_m)\not=\scrQ(C_m)$.
One way this different behavior manifests itself is in the explosion
of the number of facets for $\scrP(C_m)$.
But in some sense the graphs $P_m$ and $C_m$ do not look very different,
and we see this echoed in the facts that: (i) the asymptotic behavior of
their volumes is identical (compare \eqref{asymp_QPm} and \eqref{asymp_QCm}),
and (ii) $\scrQ(P_m) \setminus \scrP(P_m) =\emptyset$ while $\vol_{2m}(\scrQ(C_m) \setminus \scrP(C_m))^{1/2m}$
decays like $e/m\sqrt{2}$, so it is nearly zero.
\\

From what we have seen so far, it is natural to wonder whether $\scrP(G)$ and $\scrQ(G)$ are always close, when $G$ is sparse.
For a natural number $p$, let $C^p_3$ denote
a graph that is the disjoint union of $p$ copies of the triangle
$C_3$. For $m$ divisible by 3, we wish to compare the behaviors of  $\scrQ(C^{m/3}_3)$ and
$\scrQ(C^{m/3}_3)\setminus \scrP(C^{m/3}_3)$ with
those of
$\scrQ(C_m)$ and
$\scrQ(C_m)\setminus\scrP(C_m)$.

\begin{cor}\label{cor:Q3}

\begin{equation} \label{Q3}
 \vol_{2m}(\scrQ(C^{m/3}_3))^{1/2m} ~=~ \left(\frac{1}{120}\right)^{1/6} ~\approx~ 0.450267
\end{equation}

 and

 \begin{equation} \label{Q3_diff}
\vol_{2m}(\scrQ(C^{m/3}_3)\setminus\scrP(C^{m/3}_3))^{1/2m} ~=~ \left(\frac{1}{360}\right)^{1/6}
 ~\approx~ 0.374929.
 \end{equation}
\end{cor}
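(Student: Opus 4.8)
The plan is to reduce everything to a single triangle and then exploit the disjointness of the $m/3$ copies. First I would record the base data for one triangle $C_3$, which lives in dimension $2\cdot 3 = 6$. Setting $m=3$ in Theorem~\ref{vol_QCm} and using $A_5=16$ gives
\[
\vol_6(\scrQ(C_3)) ~=~ \frac{3 A_5}{2^3\,6!} ~=~ \frac{48}{5760} ~=~ \frac{1}{120},
\]
while setting $m=3$ in Theorem~\ref{volPCm} gives
\[
\vol_6(\scrP(C_3)) ~=~ \frac{1}{120} - \frac{2^{1}}{6!} ~=~ \frac{1}{120} - \frac{1}{360} ~=~ \frac{1}{180}.
\]
Hence the single-triangle excess is $\vol_6(\scrQ(C_3)\setminus\scrP(C_3)) = \tfrac{1}{120}-\tfrac{1}{180} = \tfrac{1}{360}$; equivalently, by Lemmas~\ref{lem:disj} and~\ref{cutoffhalf} this excess is the union of the $2^{3-1}=4$ disjoint cut-off simplices, each of volume $\tfrac{1}{2\cdot 6!} = \tfrac{1}{1440}$, totalling $4/1440 = 1/360$.

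For \eqref{Q3} I would apply the cross-product formula of Lemma~\ref{crossproduct}. Since $C_3^{m/3}$ is the vertex- and edge-disjoint union of $m/3$ copies of $C_3$, each contributing $6$ to the dimension, the graph sits in dimension $2m$ and
\[
\vol_{2m}(\scrQ(C_3^{m/3})) ~=~ \prod_{i=1}^{m/3}\vol_6(\scrQ(C_3)) ~=~ \left(\frac{1}{120}\right)^{m/3}.
\]
Taking the $2m$-th root yields $\bigl((1/120)^{m/3}\bigr)^{1/2m} = (1/120)^{1/6}$, exactly as claimed; note this is an exact equality for every $m$ divisible by $3$, not merely an asymptotic statement.

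For \eqref{Q3_diff} I would factor the removed region over the $m/3$ independent triangle components. The portion of $\scrQ(C_3^{m/3})$ that the odd-cycle inequalities strip away factors, component by component, as the product of the single-triangle excess regions $\scrQ(C_3)\setminus\scrP(C_3)$, so that the relevant $2m$-dimensional volume is the product of the per-triangle excesses,
\[
\prod_{i=1}^{m/3}\vol_6(\scrQ(C_3)\setminus\scrP(C_3)) ~=~ \left(\frac{1}{360}\right)^{m/3},
\]
and the $2m$-th root is $(1/360)^{1/6}$.

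The delicate point, and the main obstacle, is precisely this factorization of the excess. The cross-product formula of Lemma~\ref{crossproduct} applies cleanly to $\scrQ$ and to $\scrP$ separately, but one must take care to apply it to the correct removed region: the quantity that factors as $(1/360)^{m/3}$ is the product of the per-component cut-off regions, and it is this product whose $2m$-th root gives the stated constant. Once the factorization is pinned down, both displayed constants follow from the elementary root computations above, and the contrast with a single long cycle is immediate: by Corollary~\ref{cor:asympCm} the analogous root $\vol_{2m}(\scrQ(C_m)\setminus\scrP(C_m))^{1/2m}$ decays like $e/(m\sqrt 2)$, whereas here it stays fixed at $(1/360)^{1/6}\approx 0.375$.
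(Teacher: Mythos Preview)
Your approach is exactly the paper's: invoke Theorem~\ref{vol_QCm}, Theorem~\ref{volPCm}, and Lemma~\ref{crossproduct}. For \eqref{Q3} your computation is complete and correct, and it matches the paper's intended (one-line) argument.

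For \eqref{Q3_diff}, however, the ``delicate point'' you flag is a genuine gap, and your proposed resolution is not correct. The set-theoretic difference does \emph{not} factor as a product of per-triangle excesses: if $A_i\subset B_i$ then $\prod_i B_i\setminus\prod_i A_i$ is the set of tuples lying outside $A_i$ in \emph{at least one} component, not in every component. Using Lemma~\ref{crossproduct} for $\scrQ$ and for $\scrP$ separately, what one actually gets is
\[
\vol_{2m}\bigl(\scrQ(C_3^{m/3})\setminus\scrP(C_3^{m/3})\bigr)
~=~\left(\tfrac{1}{120}\right)^{m/3}-\left(\tfrac{1}{180}\right)^{m/3},
\]
which equals $1/360$ only when $m=3$; already for $m=6$ it is $5/129600$, whose $12$-th root differs from $(1/360)^{1/6}=(1/129600)^{1/12}$. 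In fact the $2m$-th root of the true difference tends to $(1/120)^{1/6}$ as $m\to\infty$, since the $(1/120)^{m/3}$ term dominates. So the quantity whose $2m$-th root is exactly $(1/360)^{1/6}$ is the product $\prod_i\bigl(\scrQ(C_3)\setminus\scrP(C_3)\bigr)$, not the set $\scrQ(C_3^{m/3})\setminus\scrP(C_3^{m/3})$; the paper's terse proof does not address this distinction either, and as literally written \eqref{Q3_diff} holds only for $m=3$.
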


\begin{proof}
Easily follows from Theorem \ref{vol_QCm}, Theorem \ref{volPCm} and Lemma \ref{crossproduct}.
\end{proof}

It is very interesting to see, comparing \eqref{asymp_QCm} with  \eqref{Q3},
 that $\vol_{2m}(\scrQ(C_m))$
has a remarkably similar behavior to $\vol_{2m}(\scrQ(C^{m/3}_3))$,
while, comparing  \eqref{asymp_QCm_diff} with \eqref{asymp_QCm} and \eqref{Q3_diff},
 $\vol_{2m}\left(\scrQ(C_m) \setminus \scrP(C_m)\right)$
is quite small compared to both $\vol_{2m}(\scrQ(C_m))$ and to
$\vol_{2m}(\scrQ(C^{m/3}_3)\setminus\scrP(C^{m/3}_3))$.
That is, for one long cycle $C_m$, the polytope $\scrP$,
which needs  $4m+2^{m-1}$ facets to describe\footnote{for $\scrP(C_m)$,
there are $4m$ facets corresponding to inequalities of the type
 \ref{facet0}-\ref{facet3} (4 per edge),
 and $2^{m-1}$ facets corresponding to inequalities of the type \ref{oddcyclefacet}
 ($2^{m-1}$ odd subsets of an $m$-element set); see \cite{Padberg1989}}, is well approximated
by the  polytope $\scrQ$, which needs only $4m$
facets to describe. On the other hand,
for a collection of $m/3$ triangles (which has the same number of
edges as $C_m$), the
polytope $\scrP$ only needs $16m/3$ facets to describe
and in this case it is not well approximated by
$\scrQ$, which still needs only $4m$
facets to describe.

The takeaway from our comparative asymptotic volume analysis, is that for one long cycle,
we get a good approximation of the heavy $\scrP$ using a relatively very-light
relaxation $\scrQ$. While for a collection of triangles, we gain a lot using what turns out to be a not-very-heavy refinement of $\scrQ$.
We can reasonably hope that this careful analysis
gives us a useful message for other sparse graphs, and in pursuit of that,
in the next section, we make some computational experiments.

\section{Computational experiments} \label{sec:exper}

In this section, we report on computational experiments designed to see what behavior of cycles, seen in \S\ref{sec:asymp}, persists for a family of more complex graphs. In \S\ref{sec:asymp}, we have seen different behaviors for cycles, depending on their lengths.  So, we consider a class of graphs that has attributes of both a long cycle and many small cycles.
For $n\geq 3$, we define the \emph{$n$-necklace} $N_n$ to be a cycle $C_n$ with a triangle, i.e. $C_3$, ``hanging'' from each vertex of $C_n$. In Figure \ref{8necklace}, we depict $N_8$.

\begin{figure}
\begin{center}

	\includegraphics[width=.45\linewidth]{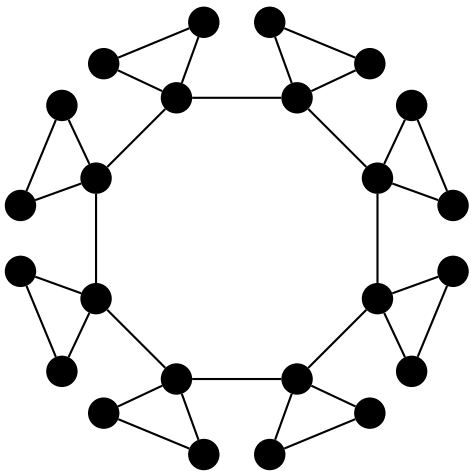}
\end{center}
\caption{8-necklace}  \label{8necklace}
\end{figure}

Our goal is to compare the volumes of various relaxations
of $\scrP(N_n)$. We compare them via their $d$-th roots,
following the spirit of \S\ref{sec:asymp}.
The polytope $\scrR(N_n)$ is the part of the basic relaxation $\scrQ(N_n)$
that satisfies the $2^{n-1}$ odd cycle
inequalities associated with the big cycle $C_n$.
The polytope $\scrT(N_n)$ is the part of the basic relaxation $\scrQ(N_n)$
that satisfies the four odd cycle inequalities
associated with each triangle $C_3$.
Finally, we have the usual boolean quadric polytope $\scrP(N_n)$ of $N_n$,
which we note is the intersection of $\scrR(N_n)$ and $\scrT(N_n)$
--- this is because $N_n$ is a series-parallel graph, and for
series-parallel graphs, $\scrP(N_n)$ is completely described
 by \ref{facet0}-\ref{oddcyclefacet} (see \cite[Theorem 10]{Padberg1989}).

Using {\tt LEcount} (see \cite{LeCount} and \cite{Kangas2016}),
we exactly calculated $\left(\vol_{d}(\scrQ(N_n))\right)^{1/d}$, for
$n=4,\ldots,10$.  These numbers appear in the second column of Table \ref{VN}.
Note that the polytope $\scrQ(N_n)$ lives in dimension
$d=7n$.
We stopped after $n=10$, due to memory issues.
But we can easily observe that to 6 decimal places,
we have a clear picture of the limiting constant.

For $n=4,\ldots,12$, we approximated the volumes of
several related polytopes, using the {\tt Matlab}
software \cite{MatlabVol}. These numbers appear
in the remaining columns of Table \ref{VN}.
Note that in the
{\tt Matlab} software, we varied the precision $\eps$ depending on the
dimension, so as to approximate the $d$-th roots to a
precision of roughly $\delta=0.0001$. So we set $\eps:=
(1+\delta)^d-1$.

Generally, $\scrT(N_n)$ is a  very light refinement of
$\scrQ(N_n)$, while $\scrR(N_n)$ is a rather heavy refinement.
For example, for $n=12$, as compared to $\scrQ(N_{12})$,
$\scrR(N_{12})$ uses 2048 extra inequalities, while
$\scrT(N_{12})$ uses only 48 extra inequalities, and hence
$\scrP(N_{12})$ uses  2096 extra inequalities.
What we can easily see is that we almost completely capture
$\scrP(N_n)$ with the very light  $\scrT(N_n)$.
Furthermore, the very heavy $\scrR(N_n)$ leaves a significant
gap to $\scrP(N_n)$. In summary, the messages of Corollaries \ref{cor:asympCm} and \ref{cor:Q3} extend to more complicated
situations: \emph{odd cycle inequalities seem to be more
important for short cycles  than  long cycles.} We note that this
echoes the message of \cite{SparseMolinaro}, where sparse inequalities are
shown to have more power than dense ones (in certain structured settings).



\begin{table}
\[
\begin{array}{ccccc}
 \hline
 n & \scrQ & \mathscr{R} & \mathscr{T} & \scrP \\
 \hline
 4 & 0.573963 & 0.425950 & 0.396662 & 0.399680 \\
 5 & 0.573963 & 0.426805 & 0.400130 & 0.399400 \\
 6 & 0.573963 & 0.426061 & 0.399665 & 0.399436 \\
 7 & 0.573963 & 0.428294 & 0.400695 & 0.400313 \\
 8 & 0.573963 & 0.426034 & 0.399421 & 0.400937 \\
 9 & 0.573963 & 0.425517 & 0.399619 & 0.400723 \\
 10 & 0.573963 & 0.426842 & 0.401514 & 0.400903 \\
 11 & $*$ & 0.426218 & 0.400749 & 0.400700 \\
 12 & $*$ & 0.426597 & 0.400482 & 0.400965
\end{array}
\]
\caption{Comparison of relaxations of the BQP for $n$-necklaces}\label{VN}
\end{table}

\section{Future work} \label{sec:conclusions}

A very challenging open problem is to get a polynomial-time algorithm
for calculating $\vol(\scrP(G))$ when $G$ is a series-parallel graph (briefly, the class of graphs having no $K_4$ graph minor). As we have mentioned, in this case,
$\scrP(G)$ is completely described by  \ref{facet0}-\ref{oddcyclefacet}.
Even for the subclass of outerplanar graphs (briefly, the class of graphs having no $K_4$ nor $K_{2,3}$ graph minor), this is already very challenging because
the number of cycles in such a graph can be exponential (in fact $>\Omega(1.5^n)$, see \cite{deMier2012}). A more manageable problem would be to restrict
our attention to the further subclass of cactus forests, i.e. graphs
where each edge is in no more than one cycle --- they can also be understood as the class of graphs having no diamond (i.e., $K_4$ minus an edge)
graph minor. The necklaces $N_n$ (see \S\ref{sec:exper}) are cactus graphs.
 Cactus graphs occur in a wide variety of applications,
e.g., location theory, communication networks, and stability analysis
(see \cite{Novak} and the references therein).
Cactus graphs can be recognized in linear time, via a depth-first search approach (see \cite{Novak} and \cite{Zmazek2003}), and of course the number of cycles in
such a graph is at most $n/3$. We do note that we can apply
Lemma \ref{lem:disj}, which tells us that odd cycle inequalities from the
same odd cycle cut off disjoint parts of $\scrQ(G)$.
But it already seems to be quite a
challenging problem to characterize the volume of $\scrQ(G)$
cut off by a single odd cycle inequality (seeking to generalize
Lemma \ref{cutoffhalf}).

For a class of inequalities and a given point, the associated \emph{separation problem} is to determine (if
there is) an inequality in the class that is not satisfied by the point. It is very well known (see \cite{GLS})
that efficiently being able to solve the separation problem for the inequalities that describe a polytope
is equivalent to efficiently being able to optimize a linear function on that polytope.
It is interesting to note that while odd cycle inequalities can be separated in $\mathcal{O}(n^3)$ time (see \cite{Barahona1989}),
we can actually devise a very simple $\mathcal{O}(n^2)$ approach for
cactus graphs. First, we enumerate the simple cycles of the cactus graph.
Then for each simple cycle, we merely have to check, for a given fixed $(x,y)$ if
there is an $A$ for which  \ref{oddcyclefacet} is violated.
The simplest way to see how to proceed involves using the affine equivalence of $\scrP(K_n)$ with the cut polytope of $K_{n+1}$ (see \cite{BQ6}).
For the cut polytope (which has variables indexed only by edges), the odd cycle inequalities can be written in the form:
\begin{equation}\label{oddcycle_cutpolytope}
\sum_{e\in F} (1-z_e) + \sum_{e\in C\setminus F} z_e \geq 1,
\end{equation}
where $C$ is a cycle and $F\subset C$ has odd cardinality.
Using an idea in \cite[proof of Proposition 1.7]{CopperLee}, for a point $\bar{z}$, we can let
\[
F':= \left\{ e\in C ~:~ \bar{z}_e > 1/2\right\}.
\]
Then $\bar{z}$ can violate \eqref{oddcycle_cutpolytope}
only if $|F\Delta F'|\leq 1$, where $F\Delta F':=(F\cup F')\setminus (F\cap F')$ (that is, the usual
\emph{symmetric difference} of $F$ and $F'$). So either $F'$ has odd cardinality and
we need only check \eqref{oddcycle_cutpolytope} for $F=F'$,
or $F$ has even cardinality, and we need only check \eqref{oddcycle_cutpolytope} for the at most
$|C|$ (odd) sets $F$ satisfying  $|F\Delta F'|= 1$.

\section*{Acknowledgments}
J. Lee was partially supported by ONR grants N00014-14-1-0315 and N00014-17-1-2296.
The authors gratefully acknowledge Komei Fukuda (developer of {\tt cdd})
and Benno B\"ueler and Andreas Enge (developers of {\tt VINCI});
use of their software was invaluable for formulating conjectures that became theorems.


\section*{References}

\bibliography{volume}

\end{document}